\documentclass[a4paper,11pt,reqno]{article}
\usepackage[utf8]{inputenc}
\usepackage{amsmath}
\usepackage{amssymb}
\usepackage{mathrsfs}
\usepackage{mathtools}
\usepackage{enumerate}
\usepackage{enumitem}
\usepackage{amsthm}
\usepackage{booktabs}
\usepackage{xcolor}
\usepackage{url,hyperref}
\usepackage{multirow}
\allowdisplaybreaks
\usepackage{caption}
\usepackage{blindtext}
\usepackage{algpseudocode,algorithm}
\usepackage{bold-extra}

\usepackage{setspace}
\newcommand{\email}[1]{\href{mailto:#1}{\texttt{#1}}}

\usepackage[margin=3.5cm]{geometry} 
\usepackage{tabularx}

\DeclareMathSymbol{:}{\mathord}{operators}{"3A}

\usepackage{float}

\newcommand{\F}{\mathbb{F}}

\renewcommand{\L}{\mathcal{L}}

\newcommand{\M}{\mathcal{M}}
\newcommand{\rk}{\operatorname{rk}}

\newcommand{\supp}{\operatorname{supp}}
\newcommand{\C}{\mathcal{C}}

\renewcommand{\S}{\mathcal{S}}

\renewcommand{\epsilon}{\varepsilon}
\newcommand{\NN}{\mathbb N}

\newcommand{\GL}{\operatorname{GL}}

\newcommand{\qbinom}[2]{\genfrac[]{0pt}{0}{#1}{#2}}

\usepackage{tabularx}

\makeatletter
\renewcommand\@makefnmark{}
\makeatother

\theoremstyle{plain} \numberwithin{equation}{section}
\newtheorem{theorem}{Theorem}[section]
\newtheorem{corollary}[theorem]{Corollary}

\newtheorem{lemma}[theorem]{Lemma}
\newtheorem{proposition}[theorem]{Proposition}

\theoremstyle{definition}
\newtheorem{definition}[theorem]{Definition}

\newtheorem{remark}[theorem]{Remark}
\newtheorem{example}[theorem]{Example}

\newtheorem{proposition/definition}[theorem]{Proposition/Definition}

\renewcommand{\phi}{\varphi}

\newif\ifcomment
\commenttrue

\title{\textbf{Almost Affine Vector Rank-Metric Codes}}

\author{Matteo Bonini\thanks{Department of Mathematical Sciences, Aalborg University, Aalborg, Denmark (\email{mabo@math.aau.dk})} \and Johan Vester Dinesen\thanks{Department of Mathematics and Systems Analysis, Aalto University, Espoo, Finland (\email{johan.v.dinesen@aalto.fi})}}
\date{}

\begin{document}

\footnotetext{\textbf{Funding:} M.~Bonini was supported by \textit{Blue Mathematics at AAU}, funded by Orient's Fond. J.V.~Dinesen was supported by the European Union MSCA Doctoral Networks (HORIZON-MSCA-2021-DN-01, Project 101072316).
}
\maketitle
\begin{abstract}
We define almost affine vector rank-metric codes as subsets $\C\subseteq \F_{q^m}^n$ whose canonical projections have cardinalities that are powers of $q^m$, and prove that they naturally induce $q$-matroids. We establish that the operations of puncturing and shortening correspond to restriction and contraction of the $q$-matroid, and show that the rank-weight and formal dual distance distributions are determined by the induced $q$-matroid. We briefly discuss applications to perfect $q$-matroid ports in linear network coding, and show that disconnected $q$-matroids need not induce disconnected ports. Finally, we show that certain Additive Generalized Twisted Gabidulin codes yield direct examples of strictly almost affine rank-metric codes, alongside a separate construction derived from proper finite semifields.
\end{abstract}

\section{Introduction}

Almost affine codes in the Hamming metric were first introduced in \cite{AlmostAffineCodes} and further considered in \cite{10.1109/TIT.2017.2654456}. These codes are defined by the property that the cardinality of any canonical projection is a perfect power of the alphabet size. This uniform projection property guarantees that an almost affine code naturally induces a matroid. By dropping the strict requirement of linearity while preserving this underlying combinatorial structure, almost affine codes capture a broader class of geometric incidence structures than classical linear codes. Furthermore, loopless almost affine codes correspond directly to ideal perfect secret sharing schemes via the framework of matroid ports \cite{Lehman,brickell_davenport,farre_padro2007}. In algebraic coding theory, rank-metric codes serve as the $q$-analogue of classical Hamming-metric codes, where the subset lattice of a finite set is replaced by the subspace lattice of a finite-dimensional vector space over a finite field. Extending the combinatorial perspective of matroids to this setting naturally leads to the theory of $q$-matroids \cite{JurriusPellikaan2018,Gorla_2019,Shiromoto}. Just as a linear Hamming-metric code determines a representable matroid, a linear rank-metric code induces a $q$-matroid whose rank function captures essential algebraic invariants of the code. Many fundamental parameters, such as generalized rank weights, duality, and the maximum rank distance property, admit direct combinatorial interpretations through their associated $q$-matroid. Furthermore, much like classical matroids model the access structures of linear secret sharing schemes, $q$-matroids provide a combinatorial framework for analyzing secure network communications and information leakage \cite{qPorts1,qPorts2}.

Recently, Alfarano and Degen introduced a $q$-analogue of multilinear representability for $q$-matroids via $\F_q$-linear matrix rank-metric codes under a divisibility condition \cite{alfarano2026representabilityqmatroidsrankmetriccodes}. Our focus instead is on almost affine vector rank-metric codes beyond the linear setting, from which we develop the associated $q$-matroid theory along with its coding-theoretic and geometric consequences. In particular, our examples yield constructions of multilinear $q$-matroids.

This paper generalizes almost affine codes to the rank metric. We prove that these codes naturally induce $q$-matroids, extending the classical correspondence between almost affine codes and matroids into the $q$-analogue. Building on this, we show that once the $q$-matroid framework is established, many classical structural results translate relatively trivially to the rank-metric setting. Specifically, we adapt the operations of puncturing and shortening, characterize the circuits of the dual $q$-matroid via minimal codewords, and demonstrate that determining the weight and dual distance distributions reduces to straightforward $q$-matroid counting arguments. Applying this framework to information leakage in linear network coding, we demonstrate that almost affine rank-metric codes yield perfect $q$-matroid ports. Notably, we show that a disconnected $q$-matroid can induce a connected $q$-matroid port, which is a divergence from classical matroid theory where connected matroids yield connected ports. Furthermore, we prove that the rank-metric weight distribution and dual distance distribution of almost affine rank-metric codes are determined by the induced $q$-matroid. Lastly, we demonstrate that specific parameter regimes of Additive Generalized Twisted Gabidulin (AGTG) codes yield simple, strictly almost affine rank-metric codes, and we provide a secondary explicit construction derived from proper finite semifields.

The remainder of this paper is organized as follows. Section~\ref{sec:rankmetriccodes} establishes preliminary definitions of rank-metric codes, support spaces, and $q$-matroids. Section~\ref{sec:aa} introduces almost affine rank-metric codes, proves their relationship with $q$-matroids, and develops the operations of puncturing, shortening, and duality. Section~\ref{sec:applicatons} applies this framework to information leakage in network codes and analyses the connectivity of the resulting $q$-matroid ports. Section~\ref{sec:weight} considers the weight distribution and dual distance distribution of almost affine rank-metric codes and shows they are determined by the underlying $q$-matroid. Finally, Section~\ref{sec:geometry} formally defines partial affine $q$-geometries, establishes their one-to-one correspondence with simple almost affine rank-metric codes, and provides explicit algebraic constructions utilizing Additive Generalized Twisted Gabidulin codes and finite proper semifields.

\section{Rank-Metric Codes and \textit{q}-Matroids} \label{sec:rankmetriccodes}
In this section, we establish the fundamental algebraic notation used for rank-metric codes and $q$-matroids. Let $q$ be a prime power and $\mathbb{F}_q$ the finite field of order $q$. We denote the natural numbers, including zero, by $\mathbb{N} = \{0,1,2,\ldots\}$. For finite-dimensional vector spaces $V$ and $E$ over $\mathbb{F}_q$, we write $V \leq E$ to indicate that $V$ is a subspace of $E$. The lattice of all subspaces of $E$, ordered by inclusion, is denoted $\mathcal{L}(E)$. For $V\leq E$, let $V^\perp$ denote the orthogonal complement of $V$ with respect to a fixed non-degenerate symmetric bilinear form $\langle\cdot,\cdot\rangle$ on $E$. For integers $n\geq k\geq 0$, the \emph{Gaussian binomial coefficient}, or $q$-binomial coefficient, counts the number of $k$-dimensional subspaces of an $n$-dimensional vector space over $\F_q$, and is defined as
\[
    \qbinom{n}{k}_q = \begin{cases}
        \prod^{k-1}_{i=0} \frac{q^n-q^i}{q^k-q^i}, & \text{if } 0\leq k \leq n, \\
        0, & \text{otherwise.}
    \end{cases}
\]
Finally, for a subspace $U \le \mathbb{F}_q^n$, the space $U \otimes_{\mathbb{F}_q}\mathbb{F}_{q^m}\leq \F_{q^m}^n$ denotes the standard tensor product used for the extension of scalars to $\mathbb{F}_{q^m}$.

Let $\Pi = \{\gamma_1, \ldots, \gamma_m\}$ be a basis of $\mathbb{F}_{q^m}$ over $\mathbb{F}_q$. For $v \in \mathbb{F}_{q^m}^n$, let $\Pi(v) \in \mathbb{F}_q^{n \times m}$ denote the matrix whose $(i, j)$th entry is the $j$th coordinate of $v_i$ with respect to the basis $\Pi$. The map $v \mapsto \Pi(v)$ defines an $\mathbb{F}_q$-linear isomorphism from $\mathbb{F}_{q^m}^n$ to $\mathbb{F}_q^{n \times m}$. The \emph{support} of a vector $v \in \mathbb{F}_{q^m}^n$, denoted $\supp(v)$, is defined as the column space of the matrix $\Pi(v)$, meaning $\supp(v) = \textup{colsp}(\Pi(v)) \leq \mathbb{F}_q^n$. The rank of $v$ is therefore $\rk(v) = \dim(\supp(v))$. This induces the rank-metric as $d(x,y) = \rk(x-y)$ for all $x,y\in \F_{q^m}^n$.

\begin{definition}
A \emph{rank-metric code} $\C$ is a subset of $\F_{q^m}^n$ endowed with the rank-metric. Its length is $n$, its \emph{dimension} is $\log_{q^m} |\C|$, and its \emph{minimum distance} is $d(\C)\coloneqq \min_{x,y\in \C, x\neq y} d(x,y)$.
\end{definition}

The \emph{Singleton bound} for a rank-metric code $\mathcal{C} \subseteq \mathbb{F}_{q^m}^n$ with $n \leq m$ establishes that $d(\mathcal{C}) \leq n - \log_{q^m}|\mathcal{C}| + 1$. A code achieving equality in this bound is called a \emph{maximum rank distance} (MRD) code.

Unless stated otherwise, the term code will refer to a rank-metric code. References to classical Hamming codes will be made explicitly when relevant.

To formulate a coordinate-free definition of almost affine rank-metric codes, it is necessary to consider the code under canonical projections. Let $V\leq \F_q^n$ be a subspace, and let $G_V$ be a matrix whose rows span $V$. Multiplying the elements of a code $\C\subseteq \F_{q^m}^n$ by $G_V$ acts as an evaluation map. To replace this matrix multiplication with a projection onto a quotient space, we first characterize the kernel of this map.

\begin{lemma} \label{lem:projectkernel}
    Let $V\leq \F_q^n$, and let $G_V$ be a matrix whose rows span $V$. For any vector $v\in \F_{q^m}^n$, the following are equivalent.
    \begin{enumerate}
        \item $G_V v=0$.
        \item $\supp(v) \leq V^\perp$.
        \item $v\in V^\perp \otimes_{\F_q} \F_{q^m}$.
    \end{enumerate}
\end{lemma}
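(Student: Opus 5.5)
The plan is to reduce all three conditions to statements about the matrix $\Pi(v)\in\F_q^{n\times m}$ and its columns. The key observation is that $G_V$ has entries in $\F_q$, so multiplication by $G_V$ commutes with taking coordinates with respect to $\Pi$. Concretely, write $v=\sum_{j=1}^m \gamma_j c_j$, where $c_j\in\F_q^n$ is the $j$th column of $\Pi(v)$. Since $\gamma_1,\dots,\gamma_m$ are linearly independent over $\F_q$ and $G_V c_j\in\F_q^{k}$, we get
\[
G_V v=\sum_{j=1}^m \gamma_j\, G_V c_j ,
\]
and this vanishes if and only if $G_V c_j=0$ for every $j$.

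For (1)$\Leftrightarrow$(2): the rows of $G_V$ span $V$, so $G_V c=0$ holds for $c\in\F_q^n$ exactly when $\langle r,c\rangle=0$ for all $r\in V$, that is, when $c\in V^\perp$. Here I use that the fixed bilinear form is the one realized by row-times-column multiplication, i.e.\ the standard dot product; this is the convention implicit in the statement. Hence $G_V v=0$ if and only if every column $c_j$ lies in $V^\perp$. Because $V^\perp$ is a subspace, this is equivalent to $\textup{colsp}(\Pi(v))=\supp(v)\le V^\perp$.

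For (2)$\Leftrightarrow$(3): identify $V^\perp\otimes_{\F_q}\F_{q^m}$ with the $\F_{q^m}$-span of $V^\perp$ inside $\F_{q^m}^n$, i.e.\ the set of elements $\sum_j \gamma_j u_j$ with $u_j\in V^\perp$. If $\supp(v)\le V^\perp$, then $v=\sum_j\gamma_j c_j$ with each $c_j\in V^\perp$, so $v$ lies in this span. Conversely, suppose $v=\sum_i \lambda_i u_i$ with $\lambda_i\in\F_{q^m}$ and $u_i\in V^\perp$. Expand each $\lambda_i=\sum_j a_{ij}\gamma_j$ with $a_{ij}\in\F_q$. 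By uniqueness of the $\Pi$-coordinates, the $j$th column of $\Pi(v)$ is $\sum_i a_{ij}u_i$, which lies in $V^\perp$. Alternatively, one can note that (3) implies (1) directly, since $G_V u_i=0$ and $G_V$ is $\F_{q^m}$-linear.

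There is no real obstacle here. The only points needing care are the uniqueness of the coordinate expansion (the $\F_q$-independence of $\Pi$, applied coordinatewise) and the convention that $\perp$ refers to the standard dot product, so that $\ker G_V = V^\perp$ over $\F_q$.
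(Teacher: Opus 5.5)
Your proof is correct and follows essentially the same route as the paper: you reduce (1)$\Leftrightarrow$(2) to the columns of $\Pi(v)$, and for (2)$\Leftrightarrow$(3) you expand $v$ in the basis $\Pi$; your alternative argument for (3)$\Rightarrow$(1) is exactly how the paper proves the converse. You are slightly more careful than the paper in two places: you justify that $G_V v=0$ forces $G_V c_j=0$ for every $j$ via the $\F_q$-independence of $\Pi$, and you flag that $\ker G_V=V^\perp$ requires the fixed form to be the standard dot product, which the paper uses only implicitly.
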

\begin{proof}
We first show the equivalence of 1 and 2. Treating $v$ as a column vector in $\F_{q^m}^n$, the product $G_V v$ evaluates to zero if and only if $G_V$ annihilates every column of the coordinate matrix $\Pi(v)$. This implies that every vector in the column space of $\Pi(v)$, which is by definition $\supp(v)$, is orthogonal to the row space $G_V$. Consequently, $G_Vv=0$ if and only if $\supp(v) \leq V^\perp$.

To show the equivalence between 2 and 3, fix an $\F_q$-basis $\{\alpha_1,\ldots,\alpha_m\}$ of $\F_{q^m}$. Every vector $v\in \F_{q^m}^n$ can be written uniquely as $v= \sum^m_{i=1} \alpha_i v_i$, where $v_i \in \F_q^n$. By definition, $\supp(v)$ is the $\F_q$-span of the vectors $v_1,\ldots,v_m$. Suppose now $\supp(v) \leq V^\perp$. Each vector $v_i$ in the above expansion then satisfies $v\in V^\perp$. Consequently, $v$ is obtained as an $\F_{q^m}$-linear combination of vectors from $V^\perp$. The set of all such combinations is precisely $V^\perp \otimes_{\F_q} \F_{q^m}$, and hence $v\in V^\perp \otimes_{\F_q} \F_{q^m}$.

Conversely, suppose that $v\in V^\perp \otimes_{\F_q} \F_{q^m}$. Then $v$ can be written in the form $v=\sum_j \beta_j w_j$, where $\beta_j\in \F_{q^m}$ and $w_j\in V^\perp$. Applying the matrix $G_V$ gives $G_Vv = \sum_j \beta_j(G_Vw_j)$. Since each $w_j$ is orthogonal to the row space of $G_V$, we have $G_Vv=0$, which as established is equivalent to $\supp(v)\leq V^\perp$.
\end{proof}

Linear rank-metric codes naturally induce $q$-matroids. We omit the specific details of the linear case here, as Section~\ref{sec:aa} generalizes this to almost affine codes. For a comprehensive treatment of $q$-matroids and their derivation from linear rank-metric codes, we refer the reader to \cite{DBLP:journals/corr/abs-2104-06570, BYRNE2022149, BYRNE2024105799, vertigan}. {For the remainder of this section let $E$ denote an $n$-dimensional vector space over $\F_q$.}

\begin{definition} A \textit{$q$-matroid} is a pair $\mathcal{M} = (E,\rho)$ where $\rho\colon \mathcal{L} (E) \rightarrow \mathbb{N}$ is a \textit{rank function} such that the following axioms hold.
\begin{enumerate}
    \item[(R1)] \textit{Boundedness}: $0\leq \rho(V) \leq \dim V$ for all $V\leq E$.
    \item[(R2)] \textit{Monotonicity}: $\rho(V)\leq \rho(W)$ for all $V,W\leq E$ with $V\leq W$.
    \item[(R3)] \textit{Submodularity}: $\rho(V + W) + \rho(V\cap W ) \leq \rho(V) + \rho(W)$ for all $V,W\leq E$.
\end{enumerate}
The \emph{rank} of $\M$ is $\rho(E)$. \label{def:qmatroid}
\end{definition}

An equivalent way to define the rank function $\rho\colon \L(E)\rightarrow \mathbb{N}$ of a $q$-matroid is via the local rank axioms (R1')-(R3') given below \cite{BYRNE2022149}.

\begin{enumerate}
    \item[(R1')] $\rho(\{0\})=0$.
    \item[(R2')] $\rho(V) \leq \rho(V+x) \leq \rho(V)+1$ for all $V\leq E$ and $x\leq E$ with $\dim x =1$.
    \item[(R3')] If $\rho(V) = \rho(V+x) = \rho(V+y)$, then $\rho(V+x+y)=\rho(V)$ for all $V,x,y\leq E$ with $\dim x = \dim y=1$.
\end{enumerate}

{
\begin{example}
    Let $0\leq k \leq n$ be an integer. Define $\rho\colon \L(E) \rightarrow \NN$ by $V\mapsto \min\{ \dim V, k\}$. Then $\mathcal{U}_{n,k} = (E,\rho)$ is a $q$-matroid, called the \emph{uniform $q$-matroid} of rank $k$.
\end{example}}

Let $\mathcal{M}=(E,\rho)$ be a $q$-matroid, and let $V \leq E$. We say that $V$ is \emph{independent} if $\rho(V)=\dim V$. We call $V$ a \emph{basis} if $V$ is independent and $\rho(V)=\rho(E)$. We call $V$ a \emph{circuit} if $V$ is dependent and every proper subspace of $V$ is independent. A $1$-dimensional circuit is called a \emph{loop}, and $\mathcal{M}$ is said to be \emph{simple} if it has no $1$- or $2$-dimensional circuits.

{ The following are some standard operations on $q$-matroids.

\begin{definition}
    Let $\mathcal{M} = (E,\rho)$ be a $q$-matroid. Define $\rho^*(V) \coloneqq \dim V - \rho(E) + \rho(V^{\perp})$ for all $V\leq E$. Then $\mathcal{M}^* \coloneqq (E,\rho^*)$ is a $q$-matroid called the \textit{dual} of $\mathcal{M}$, and $(\mathcal{M}^*)^* = \mathcal{M}$.
\end{definition}}

\begin{definition} \label{def:equivalentmatroids}
Let $\mathcal{M}_i = (E_i, \rho_i)$ be $q$-matroids for $i=1,2$. Then $\mathcal{M}_1$ and $\mathcal{M}_2$ are \textit{equivalent}, denoted by $\mathcal{M}_1 \simeq \mathcal{M}_2$, if there exists an $\mathbb{F}_q$-linear isomorphism $\phi \colon E_1 \rightarrow E_2$ such that $\rho_2(\phi(V)) = \rho_1(V)$ for all $V\leq E_1$. 
\end{definition}

We shall also consider contractions and restrictions of $q$-matroids, and we will later see how these relate to shortenings and puncturings of almost affine codes.

\begin{definition} Let $\mathcal{M} = (E,\rho)$ be a $q$-matroid and let $Z\leq E$. Then $\mathcal{M}|_Z \coloneqq (Z,\rho|_{Z})$ where $\rho|_{Z}(V) \coloneqq \rho(V)$ for all $V\leq Z$ is a $q$-matroid, called the \textit{restriction of $\mathcal{M}$ to $Z$}. Additionally, let $\sigma \colon E\rightarrow E/Z$ denote the canonical projection. Then $\mathcal{M}/Z \coloneqq (E/Z,\rho_{E/Z})$ where $\rho_{E/Z}(V) \coloneqq \rho(\sigma ^{-1}(V))-\rho(Z)$ is a $q$-matroid, called the \textit{contraction of $\mathcal{M}$ by $Z$}.
\end{definition}

\section{Almost Affine Rank-Metric Codes} \label{sec:aa}

While linear rank-metric codes naturally induce $q$-matroids through their vector space structure, extending this relationship to non-linear codes requires a property that guarantees uniform projection sizes. This motivates the coordinate-free definition of almost affine rank-metric codes.

An almost affine rank-metric code $\C\subseteq \F_{q^m}^n$ is a rank-metric code such that every projection has a size which is a power of $q^m$. As established in Lemma~\ref{lem:projectkernel}, two elements in $\C$ share the same image under the evaluation map $G_V$ if and only if their difference lies in the kernel $V^\perp \otimes_{\F_q} \F_{q^m}$. Consequently, they belong to the same coset of $V^\perp \otimes_{\F_q} \F_{q^m}$. The size of the image $|G_V\C|$ is therefore the cardinality of the projection of $\C$ onto the quotient space $\F_{q^m}^n /(V^\perp \otimes_{\F_q} \F_{q^m})$. This equivalence allows the almost affine property to be defined purely in terms of these quotient spaces, removing the dependency of a specific basis of $V$.

\begin{definition}
Let $\mathcal{C} \subseteq \mathbb{F}_{q^m}^n$ be a rank-metric code. For any subspace $V \le \mathbb{F}_q^n$, let $V^\perp$ denote its orthogonal complement in $\mathbb{F}_q^n$. Let $\pi_V \colon \mathbb{F}_{q^m}^n \to \mathbb{F}_{q^m}^n / (V^\perp \otimes_{\F_q}\mathbb{F}_{q^m})$ be the canonical projection mapping $c \mapsto c + (V^\perp \otimes_{\F_q}\mathbb{F}_{q^m})$. The code $\mathcal{C}$ is called an \emph{almost affine rank-metric code} if for all $V \le \mathbb{F}_q^n$,
$$ \log_{q^m} |\pi_V(\mathcal{C})| \in \mathbb{N}. $$
\end{definition}

All $\F_{q^m}$-linear or affine vector rank-metric codes are almost affine. Additionally, every almost affine rank-metric code is also almost affine in the Hamming metric sense.

\begin{theorem}
Let $\C\subseteq \F_{q^m}^n$ be almost affine. For each $V\leq \F_q^n$ define $$\rho_\C(V) = \log_{q^m} |\pi_V(\mathcal{C})|.$$ Then $\mathcal{M}_\C = (\F_q^n,\rho_\C)$ is a $q$-matroid.
\end{theorem}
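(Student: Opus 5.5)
We verify (R1)–(R3) of Definition~\ref{def:qmatroid} directly for $\rho_\C$; it is $\NN$-valued by the almost affine hypothesis. Throughout, write $K_V \coloneqq V^\perp\otimes_{\F_q}\F_{q^m}$, so that $\pi_V(\C)$ is the set of cosets of $K_V$ meeting $\C$. The two facts about $K_V$ we use are: $V\leq W$ implies $W^\perp\leq V^\perp$ and hence $K_W\leq K_V$; and for all $V,W\leq \F_q^n$,
\[
K_{V+W} = K_V\cap K_W, \qquad K_{V\cap W} = K_V + K_W .
\]
The first identity follows from $(V+W)^\perp = V^\perp\cap W^\perp$ together with Lemma~\ref{lem:projectkernel}: $v\in K_V\cap K_W$ if and only if $\supp(v)\leq V^\perp\cap W^\perp$. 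The second follows from $(V\cap W)^\perp=V^\perp+W^\perp$ and the fact that extension of scalars commutes with sums of subspaces.

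\textbf{(R1).} Clearly $|\pi_V(\C)|\geq 1$ when $\C\neq\emptyset$, so $\rho_\C(V)\geq 0$. For the upper bound, $\pi_V(\C)$ lies in $\F_{q^m}^n/K_V$, which has $\F_{q^m}$-dimension $n-\dim V^\perp=\dim V$. Hence $|\pi_V(\C)|\leq q^{m\dim V}$ and $\rho_\C(V)\leq\dim V$.

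\textbf{(R2).} If $V\leq W$ then $K_W\leq K_V$, so $\pi_V$ factors through $\pi_W$. Thus $\pi_V(\C)$ is the image of $\pi_W(\C)$ under a map, giving $|\pi_V(\C)|\leq|\pi_W(\C)|$.

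\textbf{(R3).} This is the main step, and we prove it by a counting (entropy-type) argument on fibres. Set $A=K_V$ and $B=K_W$. Then $\rho_\C(V+W)$ counts the classes of $\C$ modulo $A\cap B$, and $\rho_\C(V\cap W)$ counts the classes modulo $A+B$. We must show
\[
|\C/(A\cap B)|\cdot|\C/(A+B)| \leq |\C/A|\cdot|\C/B| ,
\]
where $\C/X$ denotes $\pi(\C)$ in the quotient by $X$. Consider the map
\[
\C/(A\cap B)\to \C/A\times\C/B .
\]
It is injective, since $c-c'\in A$ and $c-c'\in B$ together force $c-c'\in A\cap B$. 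Its image lies in the set of pairs $(a,b)$ whose images in $\F_{q^m}^n/(A+B)$ coincide. Partition the pairs according to this common class $t\in\C/(A+B)$. If $N_A(t)$ and $N_B(t)$ denote the numbers of $A$-classes and $B$-classes of $\C$ lying over $t$, then
\[
|\C/(A\cap B)|\leq\sum_t N_A(t)N_B(t) .
\]
Naive bounds give only $\sum_t N_AN_B\leq \max_t N_B\cdot|\C/A|$, which is not enough. Here the almost affine hypothesis must be used, and this is the main obstacle: general sets fail submodularity of $\log|\text{projection}|$, so some uniformity is required.

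The plan at this point is to exploit the fact that almost affinity holds for \emph{every} subspace, and to use the logarithmic values. I would first try the standard argument for Hamming almost affine codes (Simonis–Ashikhmin). There one shows that an almost affine code is \emph{uniform}: the fibres $N_A(t)$ are all equal to $|\C/A|/|\C/(A+B)|$. This can be proved by induction on dimension through one-dimensional steps, using that a fibre size which is both a power of $q^m$ and bounded by $q^m$ when adding one dimension forces either $1$ or $q^m$ everywhere. This reduces (R3) to the local axioms (R1')–(R3'). With uniform fibres, the sum becomes exactly
\[
|\C/(A+B)|\cdot\frac{|\C/A|}{|\C/(A+B)|}\cdot\frac{|\C/B|}{|\C/(A+B)|},
\]
which yields (R3) after taking $\log_{q^m}$.

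Concretely, I would verify (R2') from (R1) and (R2) together with the observation $\dim(\F_{q^m}^n/K_V)$ grows by one. For (R3'), suppose $\rho(V)=\rho(V+x)=\rho(V+y)$. Then the projections from $V+x$ and from $V+y$ to $V$ are bijective on $\C$. The class modulo $K_{V+x+y}=K_{V+x}\cap K_{V+y}$ is then determined by the pair of classes, and each of those is determined by the $V$-class. Hence $\rho(V+x+y)=\rho(V)$. This route, via (R1')–(R3') as cited from \cite{BYRNE2022149}, avoids the uniformity lemma entirely and is what I would ultimately write.
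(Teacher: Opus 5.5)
Your final route is correct and is essentially the paper's proof: you verify the local axioms (R1')--(R3'), and you prove (R3') by noting that the surjections $\pi_{V+x}(\C)\to\pi_V(\C)$ and $\pi_{V+y}(\C)\to\pi_V(\C)$ are bijections and that $K_{V+x+y}=K_{V+x}\cap K_{V+y}$. One correction for (R2'): the upper bound does not follow from (R1); it follows because $K_V/K_{V+x}$ is one-dimensional over $\F_{q^m}$, so each class modulo $K_V$ splits into at most $q^m$ classes modulo $K_{V+x}$, giving $|\pi_{V+x}(\C)|\le q^m|\pi_V(\C)|$.
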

\begin{proof} We shall prove it using the local rank axioms. For (R1') let $V=\{0\}$, so $V^\perp = \F_q^n$, so $V^\perp \otimes_{\F_q}\F_{q^m} = \F_{q^m}^n$. Thus, $\pi_{\{0\}}(\C)$ consists of a single coset, so $\rho_\C(\{0\})=0$.

For (R2') consider $x\leq \F_q^n$ with $\dim x =1$ and $\dim (V+x) \neq \dim V$. As $V\leq V+x$ implies $(V+x)^\perp \leq V^\perp$, so $(V+x)^\perp \otimes_{\F_q}\F_{q^m} \leq V^\perp \otimes_{\F_q}\F_{q^m}$. This induces a natural surjective homomorphism $\phi \colon \F_{q^m}^n/((V+x)^\perp \otimes_{\F_q}\F_{q^m}) \rightarrow \F_{q^m}^n/(V^\perp \otimes_{\F_q}\F_{q^m})$ defined by $\phi(s+(V+x)^\perp \otimes_{\F_q}\F_{q^m}) = s + V^\perp \otimes_{\F_q}\F_{q^m}$. Restricting this map to the projections of $\C$ yields a surjection from $\pi_{V+x}(\C)$ onto $\pi_V(\C)$, so $|\pi_V(\C)|\leq |\pi_{V+x}(\C)|$, so $\rho_\C(V) \leq \rho_\C(V+x)$. Lastly, adding a one-dimensional subspace $x$ to $V$ reduces the dimension of the orthogonal complement by one over $\F_q$, so $\dim_{\F_{q^m}} \left((V)^\perp \otimes_{\F_q}\F_{q^m}\right) / \left( (V+x)^\perp \otimes_{\F_q}\F_{q^m}\right) = 1$. As such, each coset in $\pi_V( \C)$ partitions into at most $q^m$ distinct cosets in $\pi_{V+x}(\C)$, which yields the bound $|\pi_{V+x}(\C)|\leq q^m |\pi_V(\C)|$ which finalises the argument.

For (R3’) let $V \le \F^n_q$ and $x,y \le \F^n_q$ such that $\dim x = \dim y = 1$ and $\rho_\C (V)=\rho_\C(V+x)=\rho_\C(V+y).$ We therefore have $|\pi_V(\C)|=|\pi_{V+x}(\C)|=|\pi_{V+y}(\C)|$.
Since the natural maps from $\pi_{V+x}(\C)$ and $\pi_{V+y}(\C)$ onto $\pi_V(\C)$ are surjective, they are both bijective. Hence, for any $c \in \C$, the cosets $c+(V+x)^\perp \otimes_{\F_q}\F_{q^m}$ and $c+(V+y)^\perp \otimes_{\F_q}\F_{q^m}$ are uniquely determined by the coset $c+V^\perp \otimes_{\F_q}\F_{q^m}$. As $(V+x+y)^\perp=(V+x)^\perp \cap (V+y)^\perp$ we get
\[
(V+x+y)^\perp \otimes_{\F_q}\F_{q^m} = \big((V+x)^\perp \otimes_{\F_q}\F_{q^m}\big)\cap \big((V+y)^\perp \otimes_{\F_q}\F_{q^m}\big).
\]
Thus the coset $c+(V+x+y)^\perp \otimes_{\F_q}\F_{q^m}$ is the intersection of the two cosets above, and is therefore uniquely determined by $c+V^\perp \otimes_{\F_q}\F_{q^m}$. This shows that the natural surjection from $\pi_{V+x+y}(\C)$ to $\pi_V(\C)$ is injective. Hence it is bijective, so $|\pi_{V+x+y}(\C)|=|\pi_V(\C)|$ which completes the proof.
\end{proof}

{Because an MRD code achieves the Singleton bound, any canonical projection onto a subspace of dimension $k$ or less preserves the cardinality of the code. Consequently, the induced $q$-matroid of any $k$-dimensional almost affine MRD code $\mathcal{C}\subseteq \mathbb{F}_{q^m}^n$ satisfies $\mathcal{M}_{\mathcal{C}} = \mathcal{U}_{n,k}$.}

\begin{example} 
Consider the 8-dimensional vector space $\mathcal{D} \leq \F_2^{3 \times 4}$ with basis
\begin{align*}
\Big\{&\begin{bsmallmatrix}
1 & 0 & 0 & 0 \\
0 & 0 & 0 & 0 \\
0 & 1 & 1 & 0
\end{bsmallmatrix}, 
\begin{bsmallmatrix}
0 & 1 & 0 & 0 \\
0 & 0 & 0 & 0 \\
0 & 1 & 1 & 1
\end{bsmallmatrix}, 
\begin{bsmallmatrix}
0 & 0 & 1 & 0 \\
0 & 0 & 0 & 0 \\
1 & 0 & 0 & 1
\end{bsmallmatrix}, 
\begin{bsmallmatrix}
0 & 0 & 0 & 1 \\
0 & 0 & 0 & 0 \\
1 & 1 & 0 & 1
\end{bsmallmatrix},
\begin{bsmallmatrix}
0 & 0 & 0 & 0 \\
1 & 0 & 0 & 0 \\
1 & 0 & 0 & 0 
\end{bsmallmatrix}, 
\begin{bsmallmatrix}
0 & 0 & 0 & 0 \\
0 & 1 & 0 & 0 \\
0 & 1 & 0 & 0
\end{bsmallmatrix}, 
\begin{bsmallmatrix}
0 & 0 & 0 & 0 \\
0 & 0 & 1 & 0 \\
0 & 0 & 1 & 0
\end{bsmallmatrix}, 
\begin{bsmallmatrix}
0 & 0 & 0 & 0 \\
0 & 0 & 0 & 1 \\
0 & 0 & 0 & 1
\end{bsmallmatrix}\Big\}.
\end{align*}
Fix an $\F_2$-basis $\Pi = \{\gamma_1,\dots,\gamma_4\}$ of $\F_{2^4}$ and let $\Pi^{-1}\colon \F_2^{3\times 4}\rightarrow \F_{2^4}^3$ denote the induced coordinate map. Define $\C = \Pi^{-1}(\mathcal{D})\subseteq \F_{2^4}^3$. Direct computation (e.g., using Magma) verifies that $\C$ is a $2$-dimensional almost affine rank-metric code. Because $\C$ contains the zero codeword yet is not an $\F_{2^4}$-linear space, it is a strictly almost affine code. However, by \cite[Theorem 6.10]{alfarano2026representabilityqmatroidsrankmetriccodes}, there exists a $\F_{2^4}$-linear code $\C'\leq \F_{2^4}^3$ such that $\M_\C = \M_{\C'}$.
\label{exmp:almostaffine}
\end{example}

We say that a $q$-matroid $\M$ is \emph{almost affinely representable} if there exists an almost affine code $\C$ such that $\M\simeq \M_\C$.

The Vámos matroid is a canonical example in matroid theory, notable for its pathological representability properties. In particular, it is neither representable over any field nor almost affinely representable. The $q$-analogue of the Vámos matroid was introduced in \cite{DBLP:journals/corr/abs-2104-06570}, where it was shown there exists no $\F_{q^m}$-linear rank-metric code $\C\subseteq \F_{q^m}^n$ whose induced $q$-matroid is the Vámos $q$-matroid.

\begin{example}[Vámos $q$-matroid] Let $$
\mathcal{Y} = \big\{ 
\{1,2,3,4\},\;
\{1,4,5,6\},\;
\{2,3,5,6\},\;
\{1,4,7,8\},\;
\{2,3,7,8\}
\big\}.
$$

Fix a basis $\mathcal{B} = \{b_1,\ldots,b_8\}$ of $\F_q^8$ and let $\xi \colon \{1,\ldots,8\} \rightarrow \mathcal{B}$ be a bijection. Let $$\mathcal{S} = \big\{ \langle \xi(Y) \rangle_{\mathbb{F}_q} \,\big|\, Y \in \mathcal{Y} \big\},$$
and define the function $\rho \colon \mathcal{L}(\mathbb{F}_q^8) \to \mathbb{Z}_{\geq 0}$ given by
$$
\rho(V) =
\begin{cases}
3, & \text{if } V \in \mathcal{S},\\[4pt]
\min\{\dim V,\,4\}, & \text{otherwise.}
\end{cases}
$$
Then $\M = (\F_q^8,\rho)$ is a $q$-matroid, called the Vámos $q$-matroid.
\end{example}

\begin{corollary}
The Vámos $q$-matroid is not almost affinely representable.
\end{corollary}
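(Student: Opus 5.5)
We argue by contradiction and reduce to the classical statement that the Vámos matroid is not almost affinely representable in the Hamming sense. Suppose $\C\subseteq\F_{q^m}^8$ is almost affine with $\M_\C\simeq\M$, the Vámos $q$-matroid. After composing with the isomorphism $\phi$ of Definition~\ref{def:equivalentmatroids}, we may assume $\M_\C$ is the Vámos $q$-matroid defined with respect to some basis $\mathcal{B}=\{b_1,\dots,b_8\}$ of $\F_q^8$. Changing coordinates replaces $\C$ by its image under the $\F_{q^m}$-extension of an invertible $\F_q$-matrix. Such a change preserves almost affinity, since it maps the kernels $V^\perp\otimes\F_{q^m}$ to the kernels for transformed subspaces. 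Hence we may assume that $V\mapsto\rho_\C(V)$ agrees with the Vámos rank on subspaces spanned by subsets of a chosen basis.

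Next, restrict to coordinate subspaces. For $A\subseteq\{1,\dots,8\}$, set $V_A=\langle \xi(A)\rangle$ and define $r(A)=\rho_\C(V_A)$. By the definition of $\rho$, $r(A)=3$ if $A\in\mathcal{Y}$ and $r(A)=\min\{|A|,4\}$ otherwise, so $r$ is the rank function of the classical Vámos matroid on $\{1,\dots,8\}$.

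The key claim is that $r(A)=\log_{q^m}|\C_A|$, where $\C_A$ is the classical projection of a suitably transformed code onto a coordinate set $A$. This identifies $\C$ (after the basis change sending $\mathcal{B}$ to a basis dual to the standard one) with an almost affine code over the alphabet $\F_{q^m}$ in the Hamming sense. The identification works as follows. With $G_{V_A}$ having the rows $\xi(a)$, $a\in A$, Lemma~\ref{lem:projectkernel} gives $|\pi_{V_A}(\C)|=|G_{V_A}\C|$. The map $c\mapsto(\langle \xi(a),c\rangle)_{a=1}^8$ is an $\F_{q^m}$-linear bijection $\F_{q^m}^8\to\F_{q^m}^8$ because $\mathcal{B}$ is a basis. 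Under it, $G_{V_A}\C$ is exactly the coordinate projection of the image code $\C'$ onto $A$. So $\C'$ is a code of length $8$ over an alphabet of size $q^m$, each of whose coordinate projections has size a power of $q^m$. That makes $\C'$ almost affine in the sense of \cite{AlmostAffineCodes}, with associated matroid the Vámos matroid. This is essentially the paper's remark that almost affine rank-metric codes are almost affine in the Hamming sense, applied in the basis $\mathcal{B}$.

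Finally, this contradicts the classical theorem of Simonis and Ashikhmin that the Vámos matroid is not almost affinely representable, which the paper cites in the paragraph before the statement. The main obstacle is the bookkeeping in the coordinate change: one must check that the Vámos structure is attached to an arbitrary basis $\mathcal{B}$ and equivalence $\phi$, and that the linear bijection above transforms projections correctly. Since this is linear algebra over $\F_q$ extended to $\F_{q^m}$, it should go through routinely.
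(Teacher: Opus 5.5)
Your proposal is correct and follows the paper's proof. The paper states in one line that an almost affine representation of the Vámos $q$-matroid would yield one of the classical Vámos matroid, contradicting Simonis--Ashikhmin; you supply the details of that reduction by restricting to subspaces spanned by subsets of the basis $\phi(\mathcal{B})$, and by using the invertible map $c\mapsto(\langle \xi(a),c\rangle)_{a}$ to turn those projections into Hamming coordinate projections (the preliminary change of coordinates on $\C$ is unnecessary but harmless).
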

\begin{proof}
Almost affine representability of the Vámos $q$-matroid would imply almost affine representability of the Vámos matroid, which would be a contradiction to \cite[Proposition 9]{AlmostAffineCodes}.
\end{proof}

\begin{definition}
Let $\C_1,\C_2\subseteq \F_{q^m}^n$ be almost affine. $\C_1$ and $\C_2$ are equivalent, denoted $\C_1\simeq \C_2$, if there exists $A\in \GL_n(\F_q)$ and $t\in \F_{q^m}^n$ such that $\C_2 = A\C_1 + t$.
\end{definition}

Equivalent almost affine codes yield equivalent $q$-matroids.

\begin{proposition}\label{prop:equivalence}
Let $\C_1,\C_2\subseteq \F_{q^m}^n$ be almost affine. If $\C_1\simeq \C_2$, then $\M_{\C_1} \simeq \M_{\C_2}$.
\end{proposition}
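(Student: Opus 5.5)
We need an $\F_q$-linear isomorphism $\phi$ of $\F_q^n$ with $\rho_{\C_2}(\phi(V))=\rho_{\C_1}(V)$ for all $V$. The plan is to handle the translation and the matrix separately, since $\C_2=A\C_1+t$ with $A\in\GL_n(\F_q)$ acting coordinatewise on $\F_{q^m}^n$. The natural candidate is $\phi(V)=A^{-\top}V$, where $A^{-\top}:=(A^{-1})^{\top}=(A^{\top})^{-1}$. The reason is that $\pi_V$ is governed by $V^\perp$, and conjugating by $A$ sends $V^\perp$ to $AV^\perp$, whose orthogonal complement is $A^{-\top}V$. Here $\perp$ is taken with respect to the bilinear form used throughout; I will use the standard dot product, for which transposes are the adjoints. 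For a general non-degenerate symmetric form, one would replace the transpose by the adjoint with respect to that form.

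\textbf{Translation.} First I show that translation does not change the rank function. For any $V\le\F_q^n$, set $K_V=V^\perp\otimes_{\F_q}\F_{q^m}$. The map $c+K_V\mapsto c+t+K_V$ is a bijection of the quotient $\F_{q^m}^n/K_V$, and it carries $\pi_V(\C)$ onto $\pi_V(\C+t)$. Hence $|\pi_V(\C+t)|=|\pi_V(\C)|$, so $\rho_{\C+t}=\rho_\C$. Since $\C+t$ has the same projection sizes, it is still almost affine.

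\textbf{Linear change.} Second, I show $\rho_{A\C}(A^{-\top}V)=\rho_\C(V)$. The key identity is
\[
A\bigl(V^\perp\otimes_{\F_q}\F_{q^m}\bigr)=(AV^\perp)\otimes_{\F_q}\F_{q^m},
\]
which holds because $A$ has entries in $\F_q$ and is $\F_{q^m}$-linear, so it maps $\sum\beta_j w_j$ to $\sum\beta_j Aw_j$. Next, $(A^{-\top}V)^\perp=AV^\perp$. To see this, note $\langle Aw,A^{-\top}v\rangle=w^\top A^\top A^{-\top}v=\langle w,v\rangle$, which gives $AV^\perp\subseteq(A^{-\top}V)^\perp$, and equality follows by comparing dimensions. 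Therefore $A$ induces an isomorphism
\[
\F_{q^m}^n/K_V\to\F_{q^m}^n/K_{A^{-\top}V},
\]
and this isomorphism maps $\pi_V(\C)$ bijectively onto $\pi_{A^{-\top}V}(A\C)$. Taking $\log_{q^m}$ of the cardinalities gives the claimed equality.

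\textbf{Conclusion.} Combining the two steps, $\phi=A^{-\top}$ is an $\F_q$-linear isomorphism of $\F_q^n$ with $\rho_{\C_2}(\phi(V))=\rho_{A\C_1}(\phi(V))=\rho_{\C_1}(V)$ for all $V$. This is exactly Definition~\ref{def:equivalentmatroids}. The main obstacle is the orthogonal-complement bookkeeping: one must choose $\phi=A^{-\top}$ rather than $\phi=A$, and verify $(A^{-\top}V)^\perp=AV^\perp$. The remaining steps are routine quotient-map arguments in the spirit of Lemma~\ref{lem:projectkernel}.
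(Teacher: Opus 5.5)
Your proposal is correct and follows essentially the same route as the paper. Both use the same map $\phi(V)=(A^{-1})^\top V$ and the same key identity $\phi(V)^\perp\otimes_{\F_q}\F_{q^m}=A(V^\perp\otimes_{\F_q}\F_{q^m})$, with the coset bijection induced by $x\mapsto Ax+t$. The differences are cosmetic: you treat the translation as a separate step, whereas the paper passes to differences $x_2-y_2=A(x_1-y_1)$, and you also justify both identities, which the paper only asserts.
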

\begin{proof}
By definition, there exists $A\in \GL_n(\F_q)$ and $t\in \F_{q^m}^n$ such that $\C_2 = A\C_1+t$. Define the map $\phi \colon \F_q^n\rightarrow \F_q^n$ by $\phi(V)= (A^{-1})^\top V$. This map is clearly an $\F_q$-linear isomorphism, and we must show that $\rho_2(\phi(V)) = \rho_1(V)$ for all $V\leq \F_q^n$. Recall that $\rho_i(W) = \log_{q^m} |\pi_W(\C_i)|$, where $|\pi_w(\C_i)|$ is the number of distinct cosets of $W^\perp \otimes_{\F_q}\F_{q^m}$ intersecting $\C_i$. Note that $\phi(V)^\perp = ((A^{-1})^\top V)^\perp = A(V^\perp)$, so $\phi(V)^\perp \otimes_{\F_q}\F_{q^m} = A(V^\perp \otimes_{\F_q}\F_{q^m})$.

Two elements $x_2,y_2\in \C_2$ can be uniquely written as $x_2 = Ax_1+t$ and $y_2 = Ay_1 + t$, with $x_1,y_1\in \C_1$, so $x_2-y_2 = A(x_1-y_1)$. As we have $A(x_1-y_1) \in A(V^\perp \otimes_{\F_q}\F_{q^m})$ if and only if $x_1-y_1 \in V^\perp \otimes_{\F_q}\F_{q^m}$, the affine transformation $x\mapsto Ax+t$ induces a bijection between the cosets in $\pi_V(\C_1)$ and the cosets in $\pi_{\phi(V)}(\C_2)$, which proves the result.
\end{proof}

As established, evaluating a code by multiplying it with a matrix $G_V$ yields a set of vectors $G_V\C$ that is in bijection with the set of cosets of $\pi_V(\C)$ in the quotient space $\F_{q^m}^n/(V^\perp \otimes_{\F_q}\F_{q^m})$. This allows us to define standard rank-metric operations, such as puncturing and shortening through canonical projections and quotient spaces.

\begin{definition}
    Let $\C\subseteq \F_{q^m}^n$ be almost affine, and let $Z\leq \F_q^n$. Then $\C_Z\coloneqq \pi_Z(\C)$ is the \emph{puncturing} of $\C$ on $Z$.
\end{definition}

\begin{proposition}
Let $\C\subseteq \F_{q^m}^n$ be almost affine, and let $Z\leq \F_q^n$. Then $(\M_\C)|_Z = \M_{\C_Z}$.
\end{proposition}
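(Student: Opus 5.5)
The first step is to make precise in which ambient space $\C_Z=\pi_Z(\C)$ lives, so that $\M_{\C_Z}$ is a $q$-matroid on a space identified with $Z$. The quotient $\F_{q^m}^n/(Z^\perp\otimes_{\F_q}\F_{q^m})$ is naturally isomorphic to $(\F_q^n/Z^\perp)\otimes_{\F_q}\F_{q^m}$, and the bilinear form induces a nondegenerate pairing $Z\times \F_q^n/Z^\perp\to\F_q$. Equivalently, I will fix a generator matrix $G_Z$ whose rows form a basis of $Z$ and identify $\pi_Z(\C)$ with $G_Z\C\subseteq\F_{q^m}^{\dim Z}$, as permitted by Lemma~\ref{lem:projectkernel}. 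Under this identification, a subspace $V\leq Z$ corresponds to the subspace of $\F_q^{\dim Z}$ consisting of coordinate vectors $a$ with $a^\top G_Z\in V$.

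The core claim is that for every $V\leq Z$,
\[
|\pi^{\C_Z}_V(\C_Z)| = |\pi_V(\C)|.
\]
I will prove this by composing projections. Since $V\leq Z$, we have $Z^\perp\leq V^\perp$, and therefore $Z^\perp\otimes\F_{q^m}\leq V^\perp\otimes\F_{q^m}$. Hence $\pi_V$ factors as $\pi_V=\bar\pi\circ\pi_Z$, where
\[
\bar\pi\colon \F_{q^m}^n/(Z^\perp\otimes\F_{q^m})\to\F_{q^m}^n/(V^\perp\otimes\F_{q^m})
\]
is the natural surjection. Its kernel is $(V^\perp\otimes\F_{q^m})/(Z^\perp\otimes\F_{q^m})$, and this kernel is exactly the space used to define projection onto $V$ inside the punctured ambient space. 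In matrix language, the rows $G_V=BG_Z$ span $V$, and $G_V\C = B(G_Z\C)$. Computing $\rho_{\C_Z}$ of the subspace corresponding to $V$ means counting $|B(G_Z\C)|=|G_V\C|=|\pi_V(\C)|$, by Lemma~\ref{lem:projectkernel}.

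Taking $\log_{q^m}$ then gives $\rho_{\C_Z}(V)=\rho_\C(V)=\rho_\C|_Z(V)$. This also shows that $\C_Z$ is almost affine, since each of its projections is a projection of $\C$, so $\M_{\C_Z}$ is well defined by the earlier Theorem.

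The main obstacle is purely bookkeeping: choosing the identification of the ambient space of $\C_Z$ with $Z$ so that orthogonal complements correspond correctly. Concretely, the orthogonal complement of $V$ taken inside the punctured space must match $V^\perp/Z^\perp$, which relies on the duality $(\F_q^n/Z^\perp)^*\cong Z$. I will handle this by working with $G_Z$ and the transposed coordinate map, in the same spirit as the $(A^{-1})^\top$ trick in Proposition~\ref{prop:equivalence}. The equality $(\M_\C)|_Z=\M_{\C_Z}$ should then be read up to this canonical identification, that is, as an equivalence in the sense of Definition~\ref{def:equivalentmatroids}.
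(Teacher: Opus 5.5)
Your proposal is correct and follows the same route as the paper: for $V\leq Z$ you factor $\pi_V=\tilde{\pi}_V\circ\pi_Z$, so the projections of $\C_Z$ are exactly the sets $\pi_V(\C)$, which gives $\rho_{\C_Z}(V)=\rho_\C(V)$. Your extra bookkeeping via $G_Z$ identifies the ambient space of $\C_Z$, checks that $\C_Z$ is itself almost affine, and reads the equality up to the canonical identification of $\F_q^{\dim Z}$ with $Z$. This makes explicit what the paper takes as given when it says $\rho_{\C_Z}$ is ``by definition'' a rank function on $Z$.
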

\begin{proof}
    By definition, $\rho_{\C_Z}$ is the rank function of a $q$-matroid with ground space $Z$. Now, consider $V\leq Z$, so we wish to determine $\rho_{\C_Z}(V)$. As $Z^\perp \leq V^\perp$ we have the containment $Z^\perp \otimes_{\F_q}\F_{q^m} \leq V^\perp \otimes_{\F_q}\F_{q^m}$, which induces a natural surjective homomorphism $\tilde{\pi}_V\colon \F_{q^m}^n/(Z^\perp \otimes_{\F_q}\F_{q^m})\rightarrow \F_{q^m}^n/(V^\perp \otimes_{\F_q}\F_{q^m})$ defined by $x + (Z^\perp \otimes_{\F_q}\F_{q^m}) \mapsto x + (V^\perp \otimes_{\F_q}\F_{q^m})$. By definition, $\C_Z = \pi_Z(\C)$ so $\tilde{\pi}_V \circ \pi_Z = \pi_V$, therefore $\tilde{\pi}_V(\C_Z) = \tilde{\pi}_V(\pi_Z(\C)) = \pi_V(\C)$, which proves the result. 
\end{proof}

For linear rank-metric codes, shortening with respect to a subspace $Z\leq \F_q^n$ produces the subcode consisting of codewords whose support is contained within $Z^\perp$. This operation restricts the code to elements that evaluate to zero under the projection $\pi_Z$. Since almost affine codes generalize affine spaces they do not necessarily contain the zero codeword. Consequently, defining a shortened code by strictly requiring $\pi_Z(c)=0$ might yield an empty set. To ensure a well-defined subcode, we instead anchor the operation to a fixed codeword $x\in \C$. We define the shortened code by collecting all elements $c\in \C$ for which $\supp(c-x)\leq Z^\perp$. This condition translates to the equality $\pi_Z(c) = \pi_Z(x)$, meaning the generalized shortened code forms an affine translation of a support space. To define shortened subcodes we first consider the following subcodes. For an almost affine code $\C\subseteq \F_{q^m}^n$, $x\in \C$, and $Z\leq \F_q^n$ we define such a subcode as
\[
    \C(Z,x) \coloneqq \{ c\in \C\mid \pi_Z(c)=\pi_Z(x)\}.
\]
The size of the subcode $\C(Z,x)$ is independent of the choice of $x$, as the following result shows.
\begin{proposition}\label{prop:shortenedsize}
Let $\C\subseteq \F_{q^m}^n$ be almost affine, $Z\leq \F_q^n$, and $x\in \C$. Then $$|\C(Z,x)|=q^{m(\dim \C - \rho_\C(Z))}.$$ 
\end{proposition}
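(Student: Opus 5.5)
The plan is to show that the fibres of $\pi_Z$ restricted to $\C$ all have the same size. Then $|\C(Z,x)|$, which is exactly the fibre of $\pi_Z|_\C$ over $\pi_Z(x)$, equals $|\C|/|\pi_Z(\C)|$.

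First, I identify $\dim\C$ with $\rho_\C(\F_q^n)$. For $V=\F_q^n$ we have $V^\perp=\{0\}$, so $\pi_{\F_q^n}$ is the identity on $\F_{q^m}^n$. Hence $|\pi_{\F_q^n}(\C)|=|\C|$ and $\rho_\C(\F_q^n)=\log_{q^m}|\C|=\dim\C$. The claim therefore becomes: every fibre of $\pi_Z|_\C$ has size $q^{m(\rho_\C(\F_q^n)-\rho_\C(Z))}$.

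Next, I treat a single step. Let $V\le \F_q^n$ and let $x'\le\F_q^n$ be one-dimensional with $x'\not\le V$. Consider the natural surjection $\phi\colon \pi_{V+x'}(\C)\to\pi_V(\C)$ used in the proof that $\M_\C$ is a $q$-matroid (axiom (R2')). I claim every fibre of $\phi$ has size exactly $q^{m(\rho_\C(V+x')-\rho_\C(V))}$. By (R2') there are two cases.
\begin{itemize}
\item If $\rho_\C(V+x')=\rho_\C(V)$, then $\phi$ is a surjection between sets of equal size. So it is a bijection and all fibres have size $1$.
\item If $\rho_\C(V+x')=\rho_\C(V)+1$, recall from the proof of (R2') that each coset of $V^\perp\otimes_{\F_q}\F_{q^m}$ splits into at most $q^m$ cosets of $(V+x')^\perp\otimes_{\F_q}\F_{q^m}$. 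So each fibre of $\phi$ has size at most $q^m$. The fibre sizes sum to $|\pi_{V+x'}(\C)|=q^m|\pi_V(\C)|$, and there are $|\pi_V(\C)|$ fibres. Hence every fibre has size exactly $q^m$.
\end{itemize}
This counting step, which turns "at most $q^m$" into "exactly $q^m$" using the almost affine hypothesis on both levels, is the crux of the argument. It is the only place where the almost affine property is used essentially.

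Finally, I choose a chain $Z=V_0< V_1<\dots<V_k=\F_q^n$ with $\dim V_{i+1}=\dim V_i+1$. The map $\pi_Z|_\C$ factors as the bijection $\C\to\pi_{V_k}(\C)$ followed by the successive natural maps $\pi_{V_{i+1}}(\C)\to\pi_{V_i}(\C)$, since $\tilde\pi\circ\pi_{V_{i+1}}=\pi_{V_i}$ as in the puncturing proposition. A composition of maps whose fibres all have uniform sizes $a_i$ has fibres of uniform size $\prod a_i$. Here the product telescopes to $q^{m(\rho_\C(\F_q^n)-\rho_\C(Z))}=q^{m(\dim\C-\rho_\C(Z))}$. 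Applying this to the fibre over $\pi_Z(x)$, which is nonempty because $x\in\C$, gives the result, independently of the choice of $x$.
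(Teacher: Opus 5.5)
Your proof is correct, but it takes a different route from the paper's.

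\textbf{The paper's route.} The paper argues globally. It chooses an independent $U\le Z$ with $\rho_\C(U)=\rho_\C(Z)=r$ and extends $U$ to a basis $B$ of $\M_\C$. It then observes that $\pi_B$ maps $\C$ bijectively onto the full quotient $\F_{q^m}^n/(B^\perp\otimes_{\F_q}\F_{q^m})$, since both sets have $q^{mk}$ elements. The fibres of $\pi_U$ on $\C$ are therefore pulled back from the fibres of an $\F_{q^m}$-linear surjection. These are cosets of a kernel of $\F_{q^m}$-dimension $k-r$.

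\textbf{Your route.} You argue locally along a maximal chain from $Z$ to $\F_q^n$. At each covering step you turn the bound ``at most $q^m$'' from (R2') into ``exactly $q^m$'' by counting fibres, and you then multiply the uniform fibre sizes along the chain.

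\textbf{What each buys.} Your version needs only integrality of $\rho_\C$ and the covering bound, so it avoids bases and the extension of independent spaces in a $q$-matroid. Its equal-rank case is exactly the justification for a step the paper leaves unexplained: replacing the condition $\pi_Z(c)=\pi_Z(x)$ by $\pi_U(c)=\pi_U(x)$. If you start the chain at any $V$ and stop at any $W$ with $V\le W\le\F_q^n$, you also get directly that every fibre of $\pi_W(\C)\to\pi_V(\C)$ has size $q^{m(\rho_\C(W)-\rho_\C(V))}$. That is precisely the count used in the proof of Corollary~\ref{cor:shortened}. In exchange, the paper's route exhibits an information space $B$ through which $\C$ is in bijection with $\F_{q^m}^k$. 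This reduces the count to linear algebra and gives a systematic-encoding picture of almost affine codes.
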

\begin{proof}
Denote $k= \dim \C$ and $r = \rho_\C(Z)$. Let $U\leq Z$ be an independent space in $\M_\C$ such that $\rho(U) = \rho(Z)=r$. Then
\[
    \C(Z,x) = \{c\in \C \mid \pi_U(c) =\pi_U(x)\}.
\]
Now extend $U$ to a basis $B$ of $\M_\C$. Then $\rho_\C(B) = \dim B = k$, so $|\pi_B(\C)| = q^{mk}$. As $\F_{q^m}^n/(B^\perp \otimes_{\F_q}\F_{q^m})$ also has cardinality $q^{mk}$, it follows that $\pi_B(\C)$ equals this quotient space. Hence the map $\pi_B \colon \C \rightarrow \F_{q^m}^n/(B^\perp \otimes_{\F_q}\F_{q^m})$ is bijective. As $U\leq B$, we have $B^\perp \otimes_{\F_q}\F_{q^m}\leq U^\perp \otimes_{\F_q}\F_{q^m}$, and there exists a canonical surjective linear map $\phi\colon \F_{q^m}^n/(B^\perp \otimes_{\F_q}\F_{q^m})\rightarrow \F_{q^m}^n/(U^\perp\otimes_{\F_q}\F_{q^m})$ satisfying $\pi_U = \phi \circ \pi_B$ on $\C$. Its kernel is $(U^\perp \otimes_{\F_q}\F_{q^m})/(B^\perp\otimes_{\F_q}\F_{q^m})$ which has $\F_{q^m}$-dimension $k-r$. Hence every fibre of $\phi$ has cardinality $q^{m(k-r)}$. Since $\pi_B$ is bijective, the same fibre size holds for $\pi_U$ on $\C$.
\end{proof}

\begin{corollary}
Let $\C\subseteq \F_{q^m}^n$ be almost affine. Let $Z\leq \F_q^n$ and $x\in \C$. Then $\C(Z,x)$ is an almost affine subcode of $\C$. \label{cor:shortened}
\end{corollary}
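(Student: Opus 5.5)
We must show that for every $V\leq \F_q^n$ the quantity $\log_{q^m}|\pi_V(\C(Z,x))|$ is a natural number. The strategy is to rewrite $|\pi_V(\C(Z,x))|$ as a ratio of subcode sizes, each of which is a power of $q^m$ by Proposition~\ref{prop:shortenedsize}.

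First we fix $V\leq \F_q^n$ and partition $\C(Z,x)$ according to the value of $\pi_V$. Two codewords $c,c'\in \C(Z,x)$ have the same image under $\pi_V$ exactly when $\pi_{V}(c)=\pi_V(c')$. Since both already satisfy $\pi_Z(c)=\pi_Z(c')=\pi_Z(x)$, this is equivalent to $\pi_{V+Z}(c)=\pi_{V+Z}(c')$. The last equivalence follows from $(V+Z)^\perp\otimes_{\F_q}\F_{q^m}=(V^\perp\otimes_{\F_q}\F_{q^m})\cap(Z^\perp\otimes_{\F_q}\F_{q^m})$: a coset of the intersection is the intersection of the two cosets, which is the same observation used in the proof of (R3'). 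Consequently, the fibre of $\pi_V|_{\C(Z,x)}$ through a codeword $c\in\C(Z,x)$ is precisely
\[
\{c'\in\C \mid \pi_{V+Z}(c')=\pi_{V+Z}(c)\}=\C(V+Z,c).
\]

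Next we apply Proposition~\ref{prop:shortenedsize} to both $\C(V+Z,c)$ and $\C(Z,x)$. Every fibre has the same cardinality $q^{m(\dim\C-\rho_\C(V+Z))}$, independently of $c$, and $|\C(Z,x)|=q^{m(\dim\C-\rho_\C(Z))}$. Since $\C(Z,x)$ is the disjoint union of these equal-sized fibres, dividing gives
\[
|\pi_V(\C(Z,x))|=q^{m(\rho_\C(V+Z)-\rho_\C(Z))}.
\]
The exponent $\rho_\C(V+Z)-\rho_\C(Z)$ is a nonnegative integer by monotonicity, so $\log_{q^m}|\pi_V(\C(Z,x))|\in\NN$. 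This holds for every $V$, so $\C(Z,x)$ is almost affine, and it is a subcode of $\C$ by definition.

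The only step requiring care is the fibre identification. We must make sure the fibre of $\pi_V$ within $\C(Z,x)$ is exactly $\C(V+Z,c)$, with no codewords lost or added by the extra condition $\pi_Z(c')=\pi_Z(x)$. This uses $c\in\C(Z,x)$, so that $\pi_Z(c)=\pi_Z(x)$, together with the coset-intersection identity above. Once the fibres are identified, the counting is immediate. As a byproduct, the computation shows that the rank function of $\C(Z,x)$ is $V\mapsto\rho_\C(V+Z)-\rho_\C(Z)$, i.e.\ the contraction of $\M_\C$ by $Z$ pulled back along the projection to $\F_q^n/Z$.
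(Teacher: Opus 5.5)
Your proof is correct and gives the same formula $|\pi_V(\C(Z,x))| = q^{m(\rho_\C(V+Z)-\rho_\C(Z))}$ as the paper, but you organize the count differently.

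The paper uses the same key observation: on $\C(Z,x)$, the value of $\pi_{Z+W}$ is determined by that of $\pi_W$. It uses this only to pass from $|\C(Z,x)_W|$ to $|\C(Z,x)_{Z+W}|$. It then identifies that image with the subcode $\C_{Z+W}(Z,\pi_{Z+W}(x))$ of the punctured code and applies Proposition~\ref{prop:shortenedsize} once, to $\C_{Z+W}$. This tacitly uses that the puncturing is almost affine of dimension $\rho_\C(Z+W)$, with rank function the restriction of $\rho_\C$.

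You instead stay inside $\C$. You identify each fibre of $\pi_V$ on $\C(Z,x)$ with a flat $\C(V+Z,c)$, and divide two instances of Proposition~\ref{prop:shortenedsize} applied to $\C$ itself.

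Your route is more self-contained. It never has to treat a code living in the quotient $\F_{q^m}^n/((Z+W)^\perp\otimes_{\F_q}\F_{q^m})$ as an almost affine code with the expected parameters. Your explicit check that the extra condition $\pi_Z(c')=\pi_Z(x)$ neither loses nor adds codewords also makes rigorous a step the paper states in one clause. The paper's route is a shorter chain once the puncturing/restriction result is available, and it exhibits the projection of $\C(Z,x)$ directly as a subcode of a punctured code.
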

\begin{proof}
Let $W\leq \F_q^n$. For any $c\in \C(Z,x)$, the projection $\pi_Z(c)$ is fixed to $\pi_Z(x)$. As $\pi_{Z+W}(c)$ is uniquely determined by $\pi_Z(c)$ and $\pi_W(c)$ we obtain
\begin{align*}
    |\C(Z,x)_W| = |\C(Z,x)_{Z+W}| = |\C_{Z+W}(Z,\pi_{Z+W}(x))| = q^{m(\rho_\C(Z+W) - \rho_\C(Z))},
\end{align*}
from which the result follows.
\end{proof}

\begin{definition}\label{defn:shorten}
    Let $\C\subseteq \F_{q^m}^n$ be almost affine, and $x\in \C$. Let $Z,C\leq \F_q^n$ such that $Z\oplus C = \F_q^n$. Then $\C^{Z,C}(x)\coloneqq \C(Z,x)_{C}$ is the \emph{shortening} of $\C$ by $Z$.
\end{definition}

For an almost affine code $\C\subseteq \F_{q^m}^n$ and a fixed $Z\leq \F_q^n$ the shortening by $Z$ does not depend on the choice of complement, so we omit the complement from the superscript.
The elements of the shortened code $\C^{Z,C}(x)$ depend on the chosen complement $C$. However, for any two direct complements $C_1$ and $C_2$, the canonical projection from $C_1$ to $C_2$ parallel to $Z$ establishes an affine isomorphism between $\C^{Z,C_1}(x)$ and $\C^{Z,C_2}(x)$. We summarise this in Proposition \ref{prop:shorteninvariant}. %

\begin{proposition} \label{prop:shorteninvariant}
Let $\C\subseteq \F_{q^m}^n$ be almost affine, and $x\in \C$. Let $Z,C_1,C_2\leq \F_q^n$ such that $Z\oplus C_1 = Z\oplus C_2 = \F_q^n$. Then $\C^{Z,C_1}(x)\simeq \C^{Z,C_2}(x)$.
\end{proposition}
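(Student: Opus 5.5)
The plan is to realise both shortenings as images of the same set $\C(Z,x)$ under two bijections. I then check that the transition map between them is an $\F_q$-linear isomorphism extended to $\F_{q^m}$, followed by a translation. That is exactly the form required by the definition of equivalence.

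\textbf{Step 1 (coordinates).} Write $k=\dim_{\F_q} C_1=\dim_{\F_q} C_2=n-\dim Z$. As in Lemma~\ref{lem:projectkernel}, identify $\F_{q^m}^n/(C_i^\perp\otimes_{\F_q}\F_{q^m})$ with $\F_{q^m}^k$ via $c\mapsto G_{C_i}c$, where $G_{C_i}$ is a $k\times n$ matrix whose rows form a basis of $C_i$. Under this identification $\C^{Z,C_i}(x)=G_{C_i}\,\C(Z,x)\subseteq\F_{q^m}^k$.

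\textbf{Step 2 (an affine slice).} By Lemma~\ref{lem:projectkernel}, every $c\in\C(Z,x)$ lies in $x+(Z^\perp\otimes_{\F_q}\F_{q^m})$. Since $Z\oplus C_i=\F_q^n$, we have $Z^\perp\cap C_i^\perp=(Z+C_i)^\perp=0$. Hence the $\F_q$-linear map $G_{C_i}|_{Z^\perp}\colon Z^\perp\to\F_q^k$ is injective. As $\dim Z^\perp=k$, it is an isomorphism. Fix an $\F_q$-basis $w_1,\dots,w_k$ of $Z^\perp$. Extending scalars, $G_{C_i}$ restricted to $Z^\perp\otimes_{\F_q}\F_{q^m}$ is an $\F_{q^m}$-isomorphism onto $\F_{q^m}^k$. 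It is given in the basis $(w_j)$ by the invertible $\F_q$-matrix $M_i=[G_{C_i}w_1\mid\cdots\mid G_{C_i}w_k]\in\GL_k(\F_q)$.

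\textbf{Step 3 (the transition map).} Set $A=M_2M_1^{-1}\in\GL_k(\F_q)$ and $t=G_{C_2}x-AG_{C_1}x\in\F_{q^m}^k$. For $c=x+w$ with $w\in Z^\perp\otimes_{\F_q}\F_{q^m}$, both $G_{C_i}$ are $\F_{q^m}$-linear, and $A$ satisfies $AG_{C_1}w=G_{C_2}w$ on $Z^\perp\otimes\F_{q^m}$. This identity holds on the $\F_q$-basis $w_j$, so it extends $\F_{q^m}$-linearly. Therefore
\[
G_{C_2}c=G_{C_2}x+AG_{C_1}w=A\,G_{C_1}c+t .
\]
Applying this to all $c\in\C(Z,x)$ gives $\C^{Z,C_2}(x)=A\,\C^{Z,C_1}(x)+t$. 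Both shortenings are almost affine by Corollary~\ref{cor:shortened}, since puncturing an almost affine code preserves the property. Hence $\C^{Z,C_1}(x)\simeq\C^{Z,C_2}(x)$.

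The main obstacle is Step 2. There one must make sure the transition matrix is defined over $\F_q$ and not merely over $\F_{q^m}$, because equivalence requires $A\in\GL_k(\F_q)$. This is why the argument works with an $\F_q$-basis of $Z^\perp$ and uses only the fact that extension of scalars commutes with the $\F_q$-rational matrices $G_{C_i}$. The dependence on the chosen generator matrices is harmless: changing $G_{C_i}$ multiplies the coordinates by an element of $\GL_k(\F_q)$, which can be absorbed into $A$.
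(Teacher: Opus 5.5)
Your proof is correct and is a fully written-out, coordinatized version of the paper's argument. The paper gives only the one-sentence justification preceding the statement: it transports evaluations along the projection $C_1\to C_2$ parallel to $Z$. Indeed, if you take the rows of $G_{C_2}$ to be the images of the rows of $G_{C_1}$ under that projection, then $G_{C_2}w=G_{C_1}w$ for all $w\in Z^\perp$. Your construction then yields $A=I$ and $t=G_{C_2}x-G_{C_1}x$, so routing through $Z^\perp$ (a common complement of $C_1^\perp$ and $C_2^\perp$) is just the dual description of the same affine map.
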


By Proposition~\ref{prop:equivalence}, equivalent codes induce equivalent $q$-matroids, so the resulting $q$-matroid structure is invariant under the choice of complement, and we shall omit the complement from the notation and simply write $\M_{\C^Z(x)}$. Furthermore, the size of $\C(Z,x)_W$ for any $W\leq Z^\perp$ is independent of the choice of $x\in \C$ by Proposition~\ref{prop:shortenedsize}. As such, for any $x,y\in \C$ then $\M_{\C^Z(x)} = \M_{\C^Z(y)}$, so we write $\M_{\C^Z}$ to mean any of these $q$-matroids.

To define the support of an almost affine rank-metric code we must again define it with respect to some fixed codeword $x\in \C$ as we do not have a canonical choice. Thus, we denote the $x$-support of an element $c\in \C$ as $$\supp_{x}(c)\coloneqq \supp(c-x).$$

\begin{definition}
Let $\C\subseteq \F_{q^m}^n$ be almost affine, and fix $x\in \C$. The $x$-support of $\C$ is $\supp_x(\C) = \sum _{c\in \C} \supp_x(c)$.
\end{definition}

\begin{lemma}
    Let $\C\subseteq \F_{q^m}^n$ be almost affine, and fix $x,y\in \C$. Then $\supp_x(\C) = \supp_y(\C)$.
\end{lemma}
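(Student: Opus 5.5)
The proof only needs subadditivity of supports under addition. I will first record the standard fact that for $u,w\in\F_{q^m}^n$ we have $\supp(u+w)\leq \supp(u)+\supp(w)$. This holds because $\Pi$ is $\F_q$-linear, so $\Pi(u+w)=\Pi(u)+\Pi(w)$. Every column of the sum is a sum of a column of $\Pi(u)$ and a column of $\Pi(w)$, hence the column space of the sum lies in the sum of the column spaces. I will also use $\supp(-u)=\supp(u)$, which is immediate since $\Pi(-u)=-\Pi(u)$.

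With this in hand, fix $x,y\in\C$ and take any $c\in\C$. Write $c-y=(c-x)+(x-y)$. Then
\[
\supp_y(c)=\supp(c-y)\leq \supp(c-x)+\supp(x-y)=\supp_x(c)+\supp_x(y).
\]
The last equality uses $\supp(x-y)=\supp(y-x)=\supp_x(y)$. Both $\supp_x(c)$ and $\supp_x(y)$ are summands in the definition of $\supp_x(\C)$, since $c,y\in\C$. Hence $\supp_y(c)\leq\supp_x(\C)$ for every $c\in\C$, and summing over $c$ gives $\supp_y(\C)\leq\supp_x(\C)$. Exchanging the roles of $x$ and $y$ gives the reverse inclusion, so the two spaces are equal.

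There is no real obstacle. The only point to be careful about is that the almost affine structure is not used at all: the argument works for any subset $\C$. The one step that needs a line of justification is the subadditivity of supports, which rests on the linearity of the coordinate map $\Pi$.
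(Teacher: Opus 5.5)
Your proof is correct and follows the paper's argument: write $c-y$ in terms of $c-x$ and $x-y$, apply subadditivity of supports to get $\supp_y(c)\leq \supp_x(\C)$, sum over $c\in\C$, and conclude by symmetry. You also do three things the paper leaves implicit: you justify subadditivity through the $\F_q$-linearity of $\Pi$, you justify $\supp(-u)=\supp(u)$, and you note that the almost affine hypothesis is never used.
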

\begin{proof}
    For any $c \in \mathcal{C}$, we can write $c - y = (c - x) - (y - x)$. The support is subadditive, so
    \[\supp_y(c) = \supp(c - y) \leq  \supp(c - x) + \supp(y - x).\]
    Because both $c$ and $y$ belong to $\mathcal{C}$, the individual supports $\supp_x(c) = \supp(c - x)$ and $\supp_x(y) = \supp(y - x)$ are both contained within $\supp_x(\mathcal{C})$ by definition. Consequently, $\supp_y(c) \subseteq \supp_x(\mathcal{C})$ for every codeword $c \in \mathcal{C}$. Summing these subspaces over all $c \in \mathcal{C}$ establishes the containment $\supp_y(\mathcal{C}) \subseteq \supp_x(\mathcal{C})$. By symmetry, reversing the roles of $x$ and $y$ yields $\supp_x(\mathcal{C}) \subseteq \supp_y(\mathcal{C})$, completing the proof.
\end{proof}
Thus, the support of any almost affine rank-metric code is well-defined as long as the support is taken with respect to a codeword of the given code. We therefore write $\supp \C = \supp_x(\C)$ for any $x\in \C$.

As with linear rank-metric codes we can describe the circuits of the dual $q$-matroid via the notion of minimal codewords. Minimality in this case is described as minimal support with respect to the $0$ codeword, but as per previous discussions we must here define minimality with respect to a fixed codeword $x\in \C$. 

\begin{definition}
Let $\C\subseteq \F_{q^m}^n$ be almost affine, and fix $x\in \C$. A codeword $c\in \C\setminus\{x\}$ is $x$-\emph{minimal} if there exists no codeword $c'\in \C\setminus \{x\}$ such that $\supp_x(c')<\supp_x(c)$.
\end{definition}

For the proof of Proposition~\ref{prop:dualcircuits} we will use the following description of independent sets of the dual $q$-matroid of an almost affine rank-metric code $\C$. 
\begin{align*}
    Y \text{ is independent in } \M_\C^* &\Leftrightarrow \rho_\C^*(Y) = \dim Y = \dim Y - \dim \C + \rho_\C(Y^\perp) \\
    &\Leftrightarrow \rho_\C(Y^\perp) = \dim \C \\
    &\Leftrightarrow Y^\perp \text{ contains a basis of } \M_\C \\ 
    &\Leftrightarrow \forall x\in \C \text{ then } |\C(Y^\perp,x)|=1 \\
    &\Leftrightarrow \forall x\in \C \;\nexists c\in \C\setminus\{x\} \colon \supp_x(c)\leq Y.
\end{align*}

\begin{proposition} \label{prop:dualcircuits}
Let $\C\subseteq \F_{q^m}^n$ be almost affine, and fix $x\in \C$. $C\leq \F_q^n$ is a circuit of $\M_{\C}^*$ if and only if $C = \supp_x(c)$ for some $x$-minimal codeword $c\in \C\setminus\{x\}$.
\end{proposition}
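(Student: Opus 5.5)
The plan is to reduce everything to the characterization of independence in $\M_\C^*$ displayed just before the statement, but for the single fixed codeword $x$ rather than for all codewords. The first step is to show that, for every $Y\leq \F_q^n$,
\[
Y \text{ is dependent in } \M_\C^* \iff \exists\, c\in \C\setminus\{x\} \colon \supp_x(c)\leq Y .
\]
By the displayed chain, $Y$ is independent in $\M_\C^*$ iff $\rho_\C(Y^\perp)=\dim \C$. By Proposition~\ref{prop:shortenedsize} (applied with $Z=Y^\perp$), this holds iff $|\C(Y^\perp,x)|=1$. The point of citing Proposition~\ref{prop:shortenedsize} is that this cardinality does not depend on the chosen $x$, so one fixed codeword suffices.

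Next I will identify $\C(Y^\perp,x)$ with the set of codewords whose $x$-support lies in $Y$. Since $(Y^\perp)^\perp=Y$, we have $\pi_{Y^\perp}(c)=\pi_{Y^\perp}(x)$ iff $c-x\in Y\otimes_{\F_q}\F_{q^m}$. By Lemma~\ref{lem:projectkernel}, this holds iff $\supp(c-x)\leq Y$, i.e.\ iff $\supp_x(c)\leq Y$. Because $x$ always lies in $\C(Y^\perp,x)$, the condition $|\C(Y^\perp,x)|=1$ is exactly the nonexistence of $c\neq x$ with $\supp_x(c)\leq Y$, which gives the equivalence above.

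With this criterion both directions are short.
\begin{enumerate}
\item Suppose $C$ is a circuit of $\M_\C^*$. Since $C$ is dependent, there is some $c\neq x$ with $\supp_x(c)\leq C$. If the containment were proper, then $\supp_x(c)$ would be a dependent proper subspace of $C$, which is impossible. Hence $C=\supp_x(c)$. The codeword $c$ is $x$-minimal: a codeword $c'\neq x$ with $\supp_x(c')<\supp_x(c)=C$ would again make the proper subspace $\supp_x(c')$ of $C$ dependent.
\item Conversely, let $c$ be $x$-minimal and set $C=\supp_x(c)$. Then $C$ is dependent, witnessed by $c$ itself. If some proper subspace $D<C$ were dependent, we would obtain $c'\neq x$ with $\supp_x(c')\leq D<C$, contradicting minimality. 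So every proper subspace of $C$ is independent, and $C$ is a circuit.
\end{enumerate}

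The only delicate point is the first step: the displayed characterization quantifies over all $x\in\C$, whereas the statement fixes one $x$. This gap is closed by the independence of $|\C(Z,x)|$ from $x$ in Proposition~\ref{prop:shortenedsize}, together with the identification of $\C(Y^\perp,x)$ via Lemma~\ref{lem:projectkernel}. I expect everything after that to be routine.
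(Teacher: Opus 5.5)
Your proposal is correct and follows the paper's proof essentially verbatim in both directions, using the dual-independence criterion to turn circuits into supports of $x$-minimal codewords. The one difference is your first step: you derive the criterion for the single fixed $x$ via Proposition~\ref{prop:shortenedsize} and Lemma~\ref{lem:projectkernel}, which makes explicit what the paper's displayed chain (stated with ``for all $x\in\C$'') leaves implicit when its proof applies it to one chosen $x$.
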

\begin{proof}
Suppose $C\leq \F_q^n$ is a circuit of $\M_{\C}^*$ and let $x\in \C$. By our preceding arguments there exist $c\in \C\setminus \{x\}$ such that $\supp_x(c) \leq C$. If $\supp_x(c)<C$, then $Y=\supp_x(c)$ would be a proper dependent subspace of $C$, which would be a contradiction. As such, $C = \supp_x(c)$. Furthermore, if there exists $c'\in \C\setminus \{x\}$ such that $\supp_x(c')<\supp_x(c)$ then $Y = \supp_x(c')$ would again be a dependent proper subspace of $C$. Thus, $c$ is $x$-minimal.

Conversely, let $x\in \C$ and an $x$-minimal codeword $c\in \C\setminus\{x\}$. Let $C=\supp_x(c)$, so $C$ is dependent in $\M_\C^*$. Let $Y<C$ be any proper subspace and suppose for contradiction that $Y$ is dependent. Then there exists $c'\in \C\setminus\{x\}$ such that $\supp_x(c')\leq Y < C$, which contradicts $c$ being $x$-minimal.
\end{proof}

\section{Applications to \emph{q}-Matroid Ports} \label{sec:applicatons}

Having established the local properties and dual circuits of almost affine rank-metric codes, we now explore an immediate cryptographic application of their induced $q$-matroids. Simonis and Ashikhmin \cite{AlmostAffineCodes} established that loopless almost affine Hamming codes give rise to ideal perfect secret sharing schemes. This correspondence translates naturally to the rank-metric setting via the framework of $q$-matroid ports introduced in \cite{qPorts1,qPorts2}.

\begin{definition}
    Let $\M = (E,\rho)$ be a $q$-matroid and $P_0\leq E$ such that $P_0\oplus P =E$ and $\rho(P_0)>0$. We define $\mathbf{S}_{P_0,P}(\M) = (\Gamma,\mathcal{A})$, where
    \begin{align*}
        \Gamma &\coloneqq \{ V\leq P \mid \rho(P_0 + V) = \rho(V) \}, \\
        \mathcal{A} &\coloneqq \{W\leq P \mid \rho(P_0+W) = \rho(P_0) + \rho(W) \}.
    \end{align*}
    $\mathbf{S}_{P_0,P}(\M)$ is called \emph{generalized $q$-matroid port}. If $\dim P_0 = 1$, we call it a \emph{$q$-matroid port}.
\end{definition}

For a generalized $q$-matroid port $\mathbf{S}_{P_0,P}(\M)$ we say it is \emph{perfect} if $\Gamma \cup \mathcal{A} = \L(P)$. As $q$-matroids have integral rank functions then $q$-matroid ports are always perfect. We denote $\Gamma_{\min} = \{ V\in \Gamma \mid \text{for all }W<V\Rightarrow W\notin \Gamma\}$ to be the minimal spaces of $\Gamma$. $\mathbf{S}_{P_0,P}(\M)$ is \emph{connected} if for all $p\leq P$ with $\dim p = 1$ then there exists $V\in \Gamma_{\min}$ such that $p\leq V$. It is \emph{ideal} if for all $p\leq P$ with $\dim p = 1$ then $\rho(p) =1$.

When the $q$-matroid $\M_\C$ is induced by an almost affine code we therefore obtain a perfect $q$-matroid port, and if $\C$ is also assumed to be loopless then we obtain a perfect and ideal $q$-matroid port. However, in the classical matroid case we have that a matroid port is connected if and only if the underlying matroid is connected. This is however not the case for $q$-matroids. To be more precise we first consider connectivity of $q$-matroids as introduced in \cite{conca2026intersectingcodesconnectivityqmatroids}.

\begin{definition}
    Let $\M = (E,\rho)$ be a $q$-matroid and $t>0$ be an integer. A pair $(A,B)$ of subspaces $A,B\leq E$ is a \emph{vertical $t$-separation} of $\M$ if $A\oplus B = E$, $\min\{\rho(A),\rho(B)\}\geq t$, and $\rho(A) + \rho(B) - \rho(E) <t$. A $q$-matroid is \emph{$j$-connected} if it has no $\ell$-separations for $\ell=1,\ldots,j-1$. A $q$-matroid is \emph{disconnected} if it has a $1$-separation.
\end{definition}

In classical matroid theory, a matroid is defined as connected (or strictly, 2-connected) if it lacks 1-separations. An equivalence establishes that a classical matroid is connected if and only if every pair of distinct elements is contained in a common circuit \cite{Oxley}. This combinatorial property is what enforces the classical theorem that a connected matroid yields a connected port \cite{Lehman}: because every participant is guaranteed to share a circuit with the secret, every participant has a structural path to a minimal reconstructing set. In the rank-metric setting, this equivalence collapses. The following example showcases a connected $q$-matroid port which arises from a disconnected $q$-matroid.

\begin{example}
Let $E= \F_2^4$ and choose $\alpha \in \F_{2^2}\setminus \F_2$. Define the rank-metric code $\C\leq \F_{2^2}^4$ as the space spanned by the rows of the matrix
\[
    G = \begin{bmatrix}
        1 & \alpha & 0 & 0 \\
        0 & 0 & 1 & \alpha
    \end{bmatrix}.
\]
$\M_\C$ is clearly disconnected as $A = \langle \mathbf{e}_1,\mathbf{e}_2\rangle_{\F_2}$ and $B = \langle \mathbf{e}_3,\mathbf{e}_4\rangle_{\F_2}$ form a vertical $1$-separation. Consider the $q$-matroid port $\mathbf{S}_{P_0,P}(\M_\C)$ with $P_0 = \langle \mathbf{e}_1 +\mathbf{e}_3\rangle_{\F_2}$ and $P = \langle \mathbf{e}_2,\mathbf{e}_3,\mathbf{e}_4\rangle_{\F_2}$, which is ideal and perfect. Furthermore, as
\begin{align*}
    \Gamma_{\min} = \{\langle \mathbf{e}_2+\mathbf{e}_4\rangle_{\F_2},\; \langle \mathbf{e}_2,\mathbf{e}_3\rangle_{\F_2},\; \langle \mathbf{e}_2+\mathbf{e}_3,\mathbf{e}_4\rangle_{\F_2},\; \langle \mathbf{e}_2,\mathbf{e}_3+\mathbf{e}_4\rangle_{\F_2} \},
\end{align*}
then $\mathbf{S}_{P_0,P}$ is also connected, while it arises from a disconnected $q$-matroid.
\end{example}

\section{Weight Distribution and Dual Distribution} \label{sec:weight}
Returning from cryptographic applications to the structural invariants of the codes themselves, we now address the weight distribution. In classical coding theory, this distribution is an essential combinatorial invariant determined algebraically from the codewords. For almost affine codes, which generally lack a linear structure, these parameters must instead be derived from the induced $q$-matroid. We first establish that the rank-metric weight distribution of an almost affine code is completely determined by the rank function of $\mathcal{M}_{\mathcal{C}}$.

\begin{theorem} \label{thm:x-distance}
    Let $\C\subseteq \F_{q^m}^n$ be almost affine with $k=\dim \C$, and fix a codeword $x\in \C$. For $j,u,v=0,1,\ldots,n$ denote
    \begin{align*}
    A_j(x) &\coloneqq |\{c \in \C \mid \dim \supp_x(c) = j\}|,\\
        R_v^u &\coloneqq |\{ V\leq \F_q^n \mid \rho_\C(V)=u,\; \dim V = v\}|.
    \end{align*}
    Then
    \[
    A_j(x) = \sum_{i = 0}^n(-1)^{i-j}q^{\binom{i-j}{2}}\qbinom{n-j}{n-i}_q \sum^{n}_{u=0} q^{m(k-u)}R_{n-i}^u,
    \]
    and $A_j(x)$ is entirely determined by $\M_\C$.
\end{theorem}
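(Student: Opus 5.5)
The plan is to obtain $A_j(x)$ by Möbius inversion on the subspace lattice $\L(\F_q^n)$. The inputs are counts of codewords whose $x$-support lies inside a given subspace, and Proposition~\ref{prop:shortenedsize} expresses these counts through $\rho_\C$.

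\emph{Step 1: a cumulative count.} For $W\leq\F_q^n$ set
\[
f(W)\coloneqq|\{c\in\C\mid \supp_x(c)\leq W\}|,\qquad g(W)\coloneqq|\{c\in\C\mid \supp_x(c)=W\}|.
\]
By Lemma~\ref{lem:projectkernel}, applied with $V=W^\perp$ (so that $V^\perp=W$), the condition $\supp(c-x)\leq W$ is equivalent to $c-x\in W\otimes_{\F_q}\F_{q^m}$. This in turn is equivalent to $\pi_{W^\perp}(c)=\pi_{W^\perp}(x)$. Hence $f(W)=|\C(W^\perp,x)|$, and Proposition~\ref{prop:shortenedsize} gives
\[
f(W)=q^{m(k-\rho_\C(W^\perp))}.
\]
In particular $f(W)$ depends only on $\rho_\C(W^\perp)$, and hence only on $\M_\C$ and not on $x$.

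\emph{Step 2: Möbius inversion.} Since $f(W)=\sum_{U\leq W}g(U)$, inversion on $\L(\F_q^n)$ gives $g(W)=\sum_{U\leq W}\mu(U,W)f(U)$. Here
\[
\mu(U,W)=(-1)^{d}q^{\binom{d}{2}},\qquad d=\dim W-\dim U,
\]
is the standard Möbius function of the subspace lattice.

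\emph{Step 3: summing over $\dim W=j$.} We have $A_j(x)=\sum_{\dim W=j}g(W)$. Swap the order of summation and group the subspaces $U$ by their dimension $i$. For fixed $U$ with $\dim U=i$, the number of $W\geq U$ with $\dim W=j$ is a Gaussian binomial, namely the number of $(j-i)$-dimensional subspaces of $\F_q^n/U$. For each such $W$ the Möbius value is $(-1)^{j-i}q^{\binom{j-i}{2}}$. Next group the $U$ of dimension $i$ by the value $u=\rho_\C(U^\perp)$. Since $U\mapsto U^\perp$ is a bijection from $i$-dimensional subspaces onto $(n-i)$-dimensional ones, the number of $U$ with $\dim U=i$ and $\rho_\C(U^\perp)=u$ is exactly $R^u_{n-i}$. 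This produces the inner sum $\sum_u q^{m(k-u)}R^u_{n-i}$ and the outer alternating sum with a Gaussian-binomial coefficient.

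\emph{Step 4: matching the stated form, and the obstacle.} The remaining work is to rewrite the sign, the power of $q$ and the Gaussian binomial in the indexing of the statement, using the symmetry $\qbinom{a}{b}_q=\qbinom{a}{a-b}_q$ and, if needed, a reindexing $i\mapsto n-i$. I expect this step to be the main obstacle.

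I would first test the target on the trivial code $n=k=1$, $\C=\F_{q^m}$, $x=0$. There the derivation above gives the correct values $A_0=1$ and $A_1=q^m-1$. The displayed formula as worded gives $1-q^m$ for $A_0$, because it sums over $i\geq j$ with $\qbinom{n-j}{n-i}_q$ where the inversion naturally sums over $i\leq j$ with $\qbinom{n-i}{j-i}_q$. So the summation range and exponent conventions of the statement must be reconciled with the inversion before the identity can hold term by term.

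If no reindexing of the derived expression reproduces the stated one, I would prove the corrected identity
\[
A_j(x)=\sum_{i=0}^{j}(-1)^{j-i}q^{\binom{j-i}{2}}\qbinom{n-i}{j-i}_q\sum_{u=0}^{n}q^{m(k-u)}R^u_{n-i}.
\]
I would then note the needed correction to the printed coefficients, and show how the printed form arises from it. In either case the final claim follows directly. Every quantity involved, namely $k=\rho_\C(\F_q^n)$ and the numbers $R^u_{v}$, is read off from the rank function $\rho_\C$. Hence $A_j(x)$ is determined by $\M_\C$, and in particular it is independent of the choice of $x\in\C$.
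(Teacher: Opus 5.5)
Your proof is correct and is essentially the paper's argument. The paper double counts pairs $(c,V)$ with $\supp_x(c)\leq V$ and $\dim V=i$, using Proposition~\ref{prop:shortenedsize} exactly as in your Step~1. This gives the system $\sum_{j}\qbinom{n-j}{n-i}_q A_j(x)=\sum_{u}q^{m(k-u)}R^u_{n-i}$ for $i=0,\ldots,n$, which is your relation $f(W)=\sum_{U\leq W}g(U)$ summed over $\dim W=i$. The paper then inverts this numerical system rather than inverting on the subspace lattice.

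Your Step~4 diagnosis is also right, and you can state it without hedging. That system is lower triangular, since $\qbinom{n-j}{n-i}_q=0$ unless $j\leq i$. Its inverse therefore involves only $i\leq j$, with coefficients $(-1)^{j-i}q^{\binom{j-i}{2}}\qbinom{n-i}{j-i}_q$, which is exactly your corrected identity. The displayed formula, whose factor $\qbinom{n-j}{n-i}_q$ forces $i\geq j$, is a misapplied inversion. Your $n=k=1$ check ($A_0=1-q^m$) already shows that no reindexing can recover it, so simply record the correction rather than trying to show how the printed form arises.

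Consistently with this, the paper's later uniform $q$-matroid example ($A_0=1$, $A_j=0$ for $1\leq j\leq n-k$, and the standard MRD weights) follows from your corrected formula, not from the displayed one. The final claim, that $A_j(x)$ is determined by $\M_\C$ and hence independent of $x$, holds as you state.
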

\begin{proof}
    Consider the set
    \[
    \Omega = \{ (c,V)\mid c\in \C,\; V\leq \F_q^n,\; \supp_x(c)\leq V,\; \dim V = i\},
    \]
    which we will count in two ways. First, fix a codeword $c\in \C$ with $\dim \supp_x(c)=j$. The number of $i$-dimensional subspaces $V$ which contain the $j$-dimensional subspace $\supp_x(c)$ is then $\qbinom{n-j}{n-i}_q$. Summing over all possible codewords then yields
    \[
        |\Omega| = \sum^{n}_{j=0} \qbinom{n-j}{n-i}_q A_j(x).
    \]
    Second, fix an $i$-dimensional subspace $V\leq \F_q^n$. As $\supp_x(c)\leq V$ is equivalent to $\supp_x(c) \leq (V^\perp)^\perp$, then $c\in \C(V^\perp,x)$ and by Proposition~\ref{prop:shortenedsize} then $|\C(V^\perp,x)| = q^{m(k-\rho_\C(V^\perp))}$. Since $\dim V = i$ then summing over all such subspaces $V$ is equivalent to summing over all orthogonal complements $W= V^\perp$ of dimension $n-i$, so
    \[ \sum_{\substack{W\leq \F_q^n \\ \dim W = n-i}}q^{m(k- \rho_\C(W))} = \sum_{u=0}^{n}q^{m(k-u)} R_{n-i}^u = |\Omega|.
    \]
    Applying the $q$-binomial inversion formula on the system
    \[
    \sum^{n}_{j=0} \qbinom{n-j}{n-i}_q A_j(x) = \sum_{u=0}^{n}q^{m(k-u)} R_{n-i}^u, \quad i =0,1,\ldots,n,
    \]
    we obtain the result.
\end{proof}

\begin{remark}
As $A_j(x)$ in Theorem~\ref{thm:x-distance} is independent of $x\in \C$ we shall omit it and write $A_j$ to mean $A_j(x)$.
\end{remark}

For linear rank-metric codes, the dual distance distribution describes the exact weight distribution of the orthogonal dual code. Because almost affine codes generally lack an algebraic dual code, their dual distance distribution must be defined formally as the unique algebraic solution to the $q$-ary MacWilliams identity. We demonstrate that this combinatorial distribution is entirely characterized by the dual $q$-matroid.

\begin{definition}
    Let $\C\subseteq \F_{q^m}^n$ be almost affine with $k= \dim \C$. The \emph{dual distance distribution} of $\C$ is the sequence $(B_0,B_1,\ldots,B_n)$ defined as the unique solution to the $q$-ary MacWilliams identity:
    \[
        \sum^{n}_{j=0} \qbinom{n-j}{n-i}_q A_j = q^{m(k+i-n)}\sum^n_{j=0}  \qbinom{n-j}{i}_q B_j, \quad i=0,1,\ldots,n.\]
\end{definition}
\begin{theorem}
    Let $\C\subseteq \F_{q^m}^n$ be almost affine with $k=\dim \C$. For $u,v=0,1,\ldots,n$ denote
    \[
        {R^*}^u_v \coloneqq |\{ V\leq \F_q^n \mid \rho^*_\C(V)=u,\; \dim V = v\}|.
    \]
    Then
    \[
    B_j = \sum^{n}_{i=0}(-1)^{i-j}q^{\binom{i-j}{2}}\qbinom{n-j}{n-i}_q \sum^{n}_{u=0} q^{m(n-k-u)}{R^*}^{u}_{n-i}, \quad j =0,1,\ldots,n.
    \]
\end{theorem}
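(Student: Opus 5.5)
The plan is to mirror the proof of Theorem~\ref{thm:x-distance}, with the dual $q$-matroid taking the place of $\M_\C$. The aim is to show that the right-hand side of the MacWilliams identity, $\sum_j \qbinom{n-j}{i}_q B_j$, equals an explicit $\M_\C^*$-count. After that, $q$-binomial inversion gives the formula.

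First I rewrite the left-hand side of the defining identity using the double count from Theorem~\ref{thm:x-distance}:
\[
\sum_{j=0}^n \qbinom{n-j}{n-i}_q A_j = \sum_{\dim W = n-i} q^{m(k-\rho_\C(W))}.
\]
Next I pass to the dual rank function. For $\dim W = n-i$, set $V = W^\perp$, so $\dim V = i$. By definition $\rho^*_\C(V) = i - k + \rho_\C(W)$, which gives $k - \rho_\C(W) = i - \rho^*_\C(V)$. Hence the left-hand side equals
\[
\sum_{\dim V = i} q^{m(i-\rho^*_\C(V))} = q^{m(k+i-n)} \sum_{\dim V = i} q^{m(n-k-\rho^*_\C(V))}.
\]
Dividing by $q^{m(k+i-n)}$, the MacWilliams identity becomes
\[
\sum_{j} \qbinom{n-j}{i}_q B_j = \sum_{u} q^{m(n-k-u)} {R^*}^u_i, \qquad i = 0,\ldots,n.
\]

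Then I reindex $i \mapsto n-i$ and use $\qbinom{n-j}{n-i}_q$ in place of $\qbinom{n-j}{i}_q$, so the system reads
\[
\sum_j \qbinom{n-j}{n-i}_q B_j = \sum_u q^{m(n-k-u)} {R^*}^u_{n-i}.
\]
This is exactly the shape of the system in Theorem~\ref{thm:x-distance}. The coefficient matrix $\bigl(\qbinom{n-j}{n-i}_q\bigr)_{i,j}$ is triangular with unit diagonal, so it is invertible. This both confirms that the solution $(B_j)$ is unique and lets me apply the same $q$-binomial inversion, which yields the stated formula for $B_j$.

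The main obstacle is index bookkeeping in the reindexing step. One has to check that replacing $i$ by $n-i$ turns $\qbinom{n-j}{i}_q$ into $\qbinom{n-j}{n-i}_q$ and ${R^*}^u_i$ into ${R^*}^u_{n-i}$, so that the inversion formula applies verbatim. One also has to confirm that the powers of $q^m$ cancel exactly, which uses $\dim V + \dim V^\perp = n$ and the definition of $\rho^*$.
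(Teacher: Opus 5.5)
Up to the linear system your argument is correct and is essentially the paper's. You substitute $\rho^*_\C$ through $V=W^\perp$ before dividing and reindexing, while the paper reindexes first and then uses $R^v_i={R^*}^{\,n-i+v-k}_{n-i}$. The gap is the final step. Inverting
\[
\sum_{j=0}^{n}\qbinom{n-j}{n-i}_q B_j=G(i)\coloneqq\sum_{u=0}^{n}q^{m(n-k-u)}{R^*}^u_{n-i},\qquad i=0,\ldots,n,
\]
does \emph{not} produce the displayed formula. The index bookkeeping goes wrong here, not in the reindexing you flagged. Since $\qbinom{n-j}{n-i}_q=0$ unless $j\le i$, the system is lower unitriangular, so $B_j$ depends only on $G(0),\ldots,G(j)$; for instance $B_0=G(0)$. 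The kernel $\qbinom{n-j}{n-i}_q$ in the claimed formula vanishes unless $i\ge j$, so it is upper triangular and cannot be that inverse. Concretely, take $n=k=1$ and $\C=\F_{q^m}$. Then $G(0)=G(1)=1$, and the MacWilliams identity forces $B_0=1$, $B_1=0$, whereas the stated formula gives $B_0=0$, $B_1=1$.

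Now carry out the inversion explicitly. Set $s=n-j$ and $t=n-i$, so the system reads $G(n-t)=\sum_{s\ge t}\qbinom{s}{t}_q B_{n-s}$. Apply Gaussian inversion in the form $f(t)=\sum_{s\ge t}\qbinom{s}{t}_q g(s)\iff g(t)=\sum_{s\ge t}(-1)^{s-t}q^{\binom{s-t}{2}}\qbinom{s}{t}_q f(s)$. This yields
\[
B_j=\sum_{i=0}^{j}(-1)^{j-i}q^{\binom{j-i}{2}}\qbinom{n-i}{n-j}_q\sum_{u=0}^{n}q^{m(n-k-u)}{R^*}^u_{n-i},\qquad j=0,\ldots,n.
\]
So your argument proves this corrected formula, not the one stated; the statement as written cannot be proved.

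The paper's proof makes exactly the same unchecked appeal to $q$-binomial inversion. The displayed formula of Theorem~\ref{thm:x-distance} carries the same transposition of $i$ and $j$: in the example above it would give $A_0=1-q^m$. The values $A_0=1$ and $A_j=0$ for $1\le j\le n-k$ in the uniform example of Section~\ref{sec:weight} agree with the corrected version. Write out the inversion rather than quoting it.
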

\begin{proof}
By the proof of Theorem~\ref{thm:x-distance} we have
\begin{align*}
    q^{m(k+i-n)}\sum^{n}_{j=0} \qbinom{n-j}{i}_q B_j = \sum^{n}_{u=0} q^{m(k-u)}{R}_{n-i}^u 
\end{align*}
and dividing by $q^{m(k+i-n)}$ and substituting index $i$ with $n-i$ and $u$ with $v$, we obtain the system
\begin{align}
    \sum^{n}_{j=0}\qbinom{n-j}{n-i}_qB_j = \sum^{n}_{v=0}q^{m(i-v)}{R}_i^v, \quad i =0,1,\ldots,n. \label{eq:dualsystem1}
\end{align}
Recall that as $\rho_\C^*(V)= \dim V + \rho_\C(V^\perp)-k$, so a subspace $V$ of dimension $i$ with rank $v$ in $\M_\C$ corresponds bijectively to its orthogonal complement $V^\perp$ of dimension $n-i$ and dual rank $u=(n-i)+v-k$, so ${R}_i^v = {R^*}_{n-i}^{n-i+v-k}$. Shifting the summation index of \eqref{eq:dualsystem1} to $u=n-i+v-k$, then
\[
    \sum^{n}_{j=0}\qbinom{n-j}{n-i}_qB_j = \sum^{n}_{u=0}q^{m(n-k-u)}{R^*}_{n-i}^u, \quad i =0,1,\ldots,n,
\]
and the result follows by applying $q$-binomial inversion.
\end{proof}

\begin{example}
Let $\C\subseteq \F_{q^m}^n$ be an almost affine rank-metric code with {$n\leq m$} and $2\leq k = \dim \C\leq n-2$. Suppose $\M_\C = \mathcal{U}_{n,k}$. Then 
\[
    R^u_v = \begin{cases}
        \qbinom{n}{v}_q & \text{if }v=u\leq k \text{ or } u=k<v, \\
        0 & \text{otherwise.}
    \end{cases}
\]
By Theorem~\ref{thm:x-distance}, then $A_0=1$ and $A_j=0$ for $1\leq j \leq n-k$. For $t\geq 0$, the non-zero weights evaluate to
\[
    A_{n-k+t} = \qbinom{n}{k-t}_q \sum^{t-1}_{i=0} (-1)^i q^{\binom{i}{2}}\qbinom{n-k+t}{i}_q(q^{m(t-i)}-1).
\]
Since $A_j = 0$ for $1\leq j \leq n-k$, the minimum rank distance of $\C$ is $d=n-k+1$, so $\C$ is MRD. As the distance distribution of $\C$ is non-negative, then
\[
    A_{n-k+2} = \qbinom{n}{k-2}_q\left((q^{2m}-1)- \qbinom{n-k+2}{1}_q(q^m-1) \right) \geq 0,
\]
which implies
\[
    q^{m}\geq \sum^{n-k+1}_{i=1}q^i,
\]
so $m\geq n-k+2$. Dually, as $\M_\C^* = \mathcal{U}_{n,n-k}$ then $B_{k+2}\geq 0$, yielding $m\geq k+2$. Consequently, an almost affine MRD code with $2\leq k \leq n-2$ can exist only if $m\geq \max\{n-k+2,k+2\}$.
\end{example}

Beyond the minimum distance, the hierarchy of subcode supports is captured by the generalized rank weights. For almost affine codes, we evaluate these generalized weights using the nullity function of the dual $q$-matroid, analogous to \cite{10.1109/TIT.2017.2654456}.

\begin{definition}
    Let $\C\subseteq \F_{q^m}^n$ be almost affine. The $i$th \emph{generalized weight} is then
    \[
        d_i(\C) = \min_{V\leq \F_q^n}\{\dim V \mid \rho_\C^*(V) = \dim V - i\},
    \]
    where $i = 1,\ldots,\dim \C$.
\end{definition}

We may equivalently characterize the generalized weights by the dual $q$-matroid or by Proposition~\ref{prop:shortenedsize}, which leads to the following result.

\begin{proposition}
Let $\C\subseteq \F_{q^m}^n$ be almost affine. Let $x\in \C$ be any codeword. Then for every $i=1,\ldots,k$,
\begin{align*}
    d_i(\C) &= \min_{V\leq \F_q^n}\left\{ \dim V \,\middle|\, \rho_\C(V^\perp) = \dim \C - i  \right\} \\
            &= n - \max_{V\leq \F_q^n}\left\{\dim V \,\middle|\, \rho_\C(V) = \dim \C -i\right \} \\
            &= n - \max_{V\leq \F_q^n}\left\{ \dim V \,\middle|\, |\C(V,x)| = q^{mi} \right\}.
\end{align*}
\end{proposition}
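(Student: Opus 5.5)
The plan is to derive the three equalities in turn, starting from the definition of $d_i(\C)$ via the dual $q$-matroid and using only the dual rank formula and Proposition~\ref{prop:shortenedsize}. Write $k=\dim\C=\rho_\C(\F_q^n)$. By the definition of the dual, $\rho^*_\C(V)=\dim V-k+\rho_\C(V^\perp)$. Hence the condition $\rho^*_\C(V)=\dim V-i$ holds exactly when $\rho_\C(V^\perp)=k-i$. The set of admissible $V$ is therefore literally the same in the definition and in the first expression, and the first equality follows.

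For the second equality we reparametrize by $W=V^\perp$. The map $V\mapsto V^\perp$ is a bijection on $\L(\F_q^n)$, and it satisfies $\dim V=n-\dim W$, because the form is non-degenerate. Minimizing $\dim V$ subject to $\rho_\C(V^\perp)=k-i$ is therefore the same as minimizing $n-\dim W$ subject to $\rho_\C(W)=k-i$. This equals $n-\max\{\dim W\mid \rho_\C(W)=k-i\}$.

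For the third equality we invoke Proposition~\ref{prop:shortenedsize}, which gives $|\C(V,x)|=q^{m(k-\rho_\C(V))}$ for every $V\le\F_q^n$ and every $x\in\C$. It follows that $|\C(V,x)|=q^{mi}$ holds if and only if $\rho_\C(V)=k-i$. The two maximization problems thus range over the same family of subspaces, and the choice of $x$ is irrelevant.

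The one point needing care is well-definedness: each minimum or maximum must range over a nonempty set for $1\le i\le k$. By (R2'), $\rho_\C$ takes every value between $\rho_\C(\{0\})=0$ and $\rho_\C(\F_q^n)=k$ along any maximal flag, increasing by at most one at each step. So some $W$ with $\rho_\C(W)=k-i$ exists, and its complement $W^\perp$ is admissible in the first expression. I expect no real obstacle beyond this bookkeeping of orthogonal complements and dimensions.
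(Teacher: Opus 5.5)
Your proposal is correct and follows essentially the paper's route. The paper gives no written proof; it only remarks that the equalities follow from the dual rank formula $\rho^*_\C(V)=\dim V-\dim\C+\rho_\C(V^\perp)$ and Proposition~\ref{prop:shortenedsize}. Your check via (R2') that each optimization ranges over a nonempty family is a useful detail the paper leaves implicit.
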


\begin{theorem} Let $\C\subseteq \F_{q^m}^n$ be almost affine. Then for every $i=1,\ldots,k$,
\begin{align*}
    d_i(\C) = \min_{\mathcal{D}\subseteq \mathcal{C}} \left\{ \dim \supp \mathcal{D} \,\middle|\, \mathcal{D} \text{ is almost affine and $\dim \mathcal{D} = i$} \right\}.
\end{align*} \label{thm:generalizedweights}
\end{theorem}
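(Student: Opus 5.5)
We prove the two inequalities separately, using the third characterization of $d_i(\C)$ from the preceding proposition, namely $d_i(\C) = n - \max\{\dim V \mid |\C(V,x)| = q^{mi}\}$ for any fixed $x\in\C$. The shortened subcodes $\C(V,x)$ of Corollary~\ref{cor:shortened} are exactly the almost affine subcodes that realize the minimum.

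\emph{Upper bound.} Choose $V\leq \F_q^n$ attaining the maximum, so that $|\C(V,x)| = q^{mi}$. By Corollary~\ref{cor:shortened}, $\mathcal{D}=\C(V,x)$ is an almost affine subcode of $\C$ with $\dim \mathcal{D}=i$. Every $c\in\mathcal{D}$ satisfies $\pi_V(c)=\pi_V(x)$, so by Lemma~\ref{lem:projectkernel} we have $\supp_x(c)\leq V^\perp$. Since $x\in\mathcal{D}$, we may compute $\supp \mathcal{D} = \supp_x(\mathcal{D})$, which gives $\supp\mathcal{D}\leq V^\perp$. Hence $\dim\supp\mathcal{D}\leq n-\dim V = d_i(\C)$, and the minimum over subcodes is at most $d_i(\C)$.

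\emph{Lower bound.} Let $\mathcal{D}\subseteq\C$ be almost affine with $\dim\mathcal{D}=i$. Pick $x\in\mathcal{D}$ and set $S=\supp\mathcal{D}=\supp_x(\mathcal{D})$ and $V=S^\perp$. Every $c\in\mathcal{D}$ has $\supp(c-x)\leq S = V^\perp$, so Lemma~\ref{lem:projectkernel} gives $c-x\in V^\perp\otimes_{\F_q}\F_{q^m}$, that is, $\pi_V(c)=\pi_V(x)$. Therefore $\mathcal{D}\subseteq \C(V,x)$, and by Proposition~\ref{prop:shortenedsize},
\[
q^{mi}=|\mathcal{D}|\leq |\C(V,x)| = q^{m(\dim\C-\rho_\C(V))}.
\]
It follows that $\rho_\C(V)\leq \dim\C - i$, where $\dim\C$ is the $k$ of the statement.

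The remaining step, which is the main technical point, is to pass from this inequality to the equality required in the characterization. We use (R2'): adjoining a one-dimensional space raises $\rho_\C$ by at most one. Choose a complete flag $V=V_0< V_1<\cdots<V_{n-\dim V}=\F_q^n$. Along it, $\rho_\C$ increases in unit steps from $\rho_\C(V)\leq k-i$ up to $\rho_\C(\F_q^n)=k$. Hence some $V'\geq V$ satisfies $\rho_\C(V')=k-i$ exactly. The characterization $d_i(\C)=n-\max\{\dim V \mid \rho_\C(V)=k-i\}$ then gives
\[
d_i(\C)\leq n-\dim V' \leq n-\dim V=\dim S=\dim\supp\mathcal{D}.
\]

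Combining the two bounds yields the stated equality. The fact that the formula does not depend on $x$ comes from the proposition and from the lemma showing that $\supp_x(\C)$ is the same for every choice of codeword $x\in\C$.
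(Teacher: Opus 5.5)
Your proof is correct and follows the paper's argument: in both directions you go through the shortened flats $\C(\cdot,x)$ and Proposition~\ref{prop:shortenedsize}. The flat $\C(V,x)$ realizes the minimum, and any admissible $\mathcal{D}$ sits inside the flat determined by its support. The only differences are cosmetic. You work with $V=S^\perp$ and the characterization through $\rho_\C(V)$, where the paper uses $\rho_\C(V^\perp)$. You also justify explicitly, via a complete flag and (R2'), the existence of a subspace of rank exactly $k-i$, a step the paper only asserts.
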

\begin{proof}
Let $d_i = d_i(\C)$ and let \[
    e_i = \min_{\mathcal{D}\subseteq \mathcal{C}} \left\{ \dim \supp \mathcal{D} \,\middle|\, \mathcal{D} \text{ is almost affine and $\dim \mathcal{D} = i$} \right\}.
\]

To prove $d_i\leq e_i$, let $\mathcal{D}$ be an almost affine subcode with $\dim (\mathcal{D})=i$ and $\dim \supp \mathcal{D} = e_i$. Let $V= \supp \mathcal{D}$. Fix an element $x\in \mathcal{D}$, then for every $c\in \mathcal{D}$ we have $\supp_x(c) \leq V$. This implies $c\in \mathcal{C}(V^\perp,x)$, so $\mathcal{D}$ is a subset of the flat $\C(V^\perp,x)$. Thus, $i = \dim \mathcal{D} \leq \dim \C(V^\perp,x) = k - \rho_\C(V^\perp)$ by Proposition~\ref{prop:shortenedsize}. This implies the existence of a subspace $U\leq V$ such that $k-\rho_\C(U^\perp)=i$, so $d_i(\C) \leq \dim U \leq \dim V = e_i$.

For the converse, let $V\leq \F_q^n$ such that $k - \rho_\C(V^\perp) = i$ and $\dim V = d_i(\C)$. Fix a codeword $x\in \C$ and consider the flat $\mathcal{D} = \C(V^\perp,x)$ of dimension $\dim \mathcal{D} = k-\rho_\C(V^\perp)=i$. Thus, $\mathcal{D}$ is a valid candidate for the minimum defining $e_i$. For every $c\in \mathcal{D}$ we have $\supp_x(c) \leq V$, so $\supp \mathcal{D}\leq V$, which implies $e_i \leq \dim \supp \mathcal{D} \leq \dim V = d_i$.
\end{proof}

\section{Partial Affine \textit{q}-Geometries} \label{sec:geometry}

The proof of Theorem~\ref{thm:generalizedweights} relies on the incidence structure of flats of the form $\mathcal{C}(Z,x)$. To formalize the geometric behavior of these flats and their parallel classes, we define an abstract incidence geometry. Let $\mathcal{C}\subseteq\mathbb{F}_{q^m}^n$ be an almost affine code with $k=\dim\mathcal{C}$. For $Z\le\mathbb{F}_q^n$ and $x\in\mathcal{C}$, we established in Corollary~\ref{cor:shortened} that $\mathcal{C}(Z,x)$ is an almost affine subcode of $\mathcal{C}$ of dimension $d=k-\rho_{\mathcal{C}}(Z)$. We shall refer to $\mathcal{C}(Z,x)$ as a $d$-dimensional flat of $\mathcal{C}$. Flats of dimension 0, 1, 2 and $k-1$ are points, lines, planes and hyperplanes, respectively. Two flats of the form $L_1 = \C(Z,x)$ and $L_2 = \C(Z,y)$ are \emph{parallel}, denoted $L_1 \| L_2$. Parallel flats are of equal dimension.

\begin{proposition} \label{prop:intersectflats}
Let $\C\subseteq \F_{q^m}^n$ be almost affine, $V,W\leq \F_q^n$, and $x,y\in \C$. If $z\in \C(V,x)\cap \C(W,y)$, then $\C(V,x)\cap \C(W,y) = \C(V+W,z)$.
\end{proposition}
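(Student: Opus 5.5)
The plan is to reduce everything to statements about cosets of extended subspaces and then use the identity $(V+W)^\perp \otimes_{\F_q}\F_{q^m} = (V^\perp\otimes_{\F_q}\F_{q^m})\cap(W^\perp\otimes_{\F_q}\F_{q^m})$. This identity was already used in the proof that $\M_\C$ is a $q$-matroid, in the form $(V+x+y)^\perp = (V+x)^\perp\cap(V+y)^\perp$. No almost affine property or counting is needed: the statement is purely set-theoretic once the flats are written as coset conditions.

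First I re-anchor both flats at $z$. Since $z\in\C(V,x)$, we have $\pi_V(z)=\pi_V(x)$, so the condition $\pi_V(c)=\pi_V(x)$ is equivalent to $\pi_V(c)=\pi_V(z)$. Hence $\C(V,x)=\C(V,z)$, and likewise $\C(W,y)=\C(W,z)$. Therefore
\[
\C(V,x)\cap\C(W,y)=\{c\in\C \mid c-z\in V^\perp\otimes_{\F_q}\F_{q^m}\ \text{and}\ c-z\in W^\perp\otimes_{\F_q}\F_{q^m}\}.
\]

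Next I identify the intersection of the two kernels with $(V+W)^\perp\otimes_{\F_q}\F_{q^m}$. I would argue this through supports rather than abstract tensor algebra, using Lemma~\ref{lem:projectkernel}. For $v\in\F_{q^m}^n$, $v\in U^\perp\otimes_{\F_q}\F_{q^m}$ holds if and only if $\supp(v)\le U^\perp$. Also $(V+W)^\perp=V^\perp\cap W^\perp$ in $\F_q^n$. So $v$ lies in both kernels if and only if $\supp(v)\le V^\perp\cap W^\perp=(V+W)^\perp$, which holds if and only if $v\in(V+W)^\perp\otimes_{\F_q}\F_{q^m}$. Applying this to $v=c-z$ turns the displayed set into $\{c\in\C\mid \pi_{V+W}(c)=\pi_{V+W}(z)\}=\C(V+W,z)$.

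The only step requiring care is the kernel intersection identity, and the support characterization of Lemma~\ref{lem:projectkernel} makes it immediate. I do not expect a genuine obstacle; the main point is to remember to replace $x$ and $y$ by the common anchor $z$ before intersecting.
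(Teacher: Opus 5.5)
Your proposal is correct and follows essentially the same route as the paper's proof. Both re-anchor the two flats at $z$ and identify the intersection of the two extended kernels with $(V+W)^\perp\otimes_{\F_q}\F_{q^m}$; your use of the support characterization in Lemma~\ref{lem:projectkernel} to justify that kernel identity just makes explicit a step the paper leaves implicit.
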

\begin{proof}
An element $c\in \C$ belongs to the intersection $\C(V,x)\cap \C(W,y)$ if and only if $\pi_V(c) = \pi_V(x)$ and $\pi_W(c)= \pi_W(y)$. By assumption we have $\pi_V(x) = \pi_V(z)$ and $\pi_W(y)=\pi_W(z)$. Thus, 
\begin{align*}
    c\in \C(V,x)\cap \C(W,y) &\Leftrightarrow \pi_V(c) = \pi_V(z) \text{ and } \pi_W(c)=\pi_W(z) \\
    &\Leftrightarrow c-z\in V^\perp \otimes_{\F_q}\F_{q^m} \text{ and } c-z\in W^\perp \otimes_{\F_q}\F_{q^m} \\
    &\Leftrightarrow c-z\in (V+W)^\perp \otimes_{\F_q}\F_{q^m} \\
    & \Leftrightarrow \pi_{V+W}(c)=\pi_{V+W}(z) \\
    & \Leftrightarrow c\in \C(V+W,z). \qedhere
\end{align*}
\end{proof}

For a given flat $L = \C(Z,x)$ then the flats of $\C$ parallel to $L$ partition $\C$. The \emph{parallel classes} are $\mathcal{L}(Z) = \{ \C(Z,x) \, |\, x\in \C\}$. Elements of $\mathcal{L}(Z)$ are in one-to-one correspondence with the elements of $\C_Z$. Furthermore, the parallel classes are in one-to-one correspondence with the flats of $\M_\C$. 

For two flats $L_1$ and $L_2$ we define the join $L_1 \vee L_2$ as the smallest flat containing both $L_1$ and $L_2$. While there is no general dimension formula for the join, submodularity provides a strict bound in the case when the two flats intersect non-trivially.

\begin{proposition}
    Let $L_1,L_2$ be flats of an almost affine code $\C\subseteq\F_{q^m}^n$. If $L_1 \cap L_2 \neq \varnothing$, then $
        \dim( L_1 \cap L_2) \geq \dim L_1 + \dim L_2 - \dim (L_1 \vee L_2) \geq \dim L_1 + \dim L_2 - \dim \C.$
\end{proposition}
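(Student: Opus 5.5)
Write $L_1=\C(V,x)$ and $L_2=\C(W,y)$. Since the intersection is nonempty, pick $z\in L_1\cap L_2$. Then $L_1=\C(V,z)$ and $L_2=\C(W,z)$, because $\pi_V(z)=\pi_V(x)$ and $\pi_W(z)=\pi_W(y)$. By Proposition~\ref{prop:intersectflats}, $L_1\cap L_2=\C(V+W,z)$, which is a flat. By Proposition~\ref{prop:shortenedsize} its dimension is $k-\rho_\C(V+W)$, where $k=\dim\C$. Likewise $\dim L_1=k-\rho_\C(V)$ and $\dim L_2=k-\rho_\C(W)$.

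The key step is to locate the join. The flat $\C(V\cap W,z)$ contains both $L_1$ and $L_2$. Indeed, $V\cap W\le V$ gives $V^\perp\le (V\cap W)^\perp$, so $\pi_V(c)=\pi_V(z)$ implies $\pi_{V\cap W}(c)=\pi_{V\cap W}(z)$, and the same argument works for $W$. Since $L_1\vee L_2$ is the smallest flat containing both, we get
\[
\dim(L_1\vee L_2)\le \dim \C(V\cap W,z)=k-\rho_\C(V\cap W).
\]
The one point needing care is "smallest". The join is a flat containing $L_1\cup L_2$, hence it is contained in $\C(V\cap W,z)$, and its dimension is at most that of $\C(V\cap W,z)$. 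This uses the fact that flat dimension is monotone under inclusion, which follows from cardinalities: $|L|=q^{m\dim L}$ by Proposition~\ref{prop:shortenedsize}. Monotonicity also implies that the join, defined via a minimal containing flat, has well-defined dimension. Concretely, we may take the intersection of all containing flats; it is again a flat by iterating Proposition~\ref{prop:intersectflats}, since every containing flat passes through $z$.

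Now apply submodularity (R3) to $\M_\C$:
\[
\rho_\C(V+W)+\rho_\C(V\cap W)\le \rho_\C(V)+\rho_\C(W).
\]
Rewritten in flat dimensions, this reads
\[
\dim(L_1\cap L_2)+\bigl(k-\rho_\C(V\cap W)\bigr)\ge \dim L_1+\dim L_2 .
\]
Combining with the bound $\dim(L_1\vee L_2)\le k-\rho_\C(V\cap W)$ gives
\[
\dim(L_1\cap L_2)\ge \dim L_1+\dim L_2-\dim(L_1\vee L_2).
\]
Finally, $L_1\vee L_2\subseteq\C=\C(\{0\},z)$, so $\dim(L_1\vee L_2)\le\dim\C$, which yields the second inequality.

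The main obstacle is the join step. We must make sure the join is a genuine flat whose dimension is bounded by that of the flat $\C(V\cap W,z)$ built from the intersection of the subspaces. The argument relies on closure of flats through a common point $z$ under intersection, together with cardinality monotonicity. The rest is bookkeeping with Proposition~\ref{prop:shortenedsize}.
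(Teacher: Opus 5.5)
\textbf{There is a genuine gap: your final combination step uses the bound on the join in the wrong direction.}

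Write $D=k-\rho_\C(V\cap W)=\dim\C(V\cap W,z)$. Submodularity gives
\[
\dim(L_1\cap L_2)\ge \dim L_1+\dim L_2-D .
\]
To replace $D$ by $\dim(L_1\vee L_2)$ you need $\dim(L_1\vee L_2)\ge D$, that is, $\C(V\cap W,z)\subseteq L_1\vee L_2$. What you proved is the opposite containment, $L_1\vee L_2\subseteq\C(V\cap W,z)$. That only gives $\dim(L_1\vee L_2)\le D$, and from $\dim(L_1\cap L_2)\ge \dim L_1+\dim L_2-D$ and $D\ge\dim(L_1\vee L_2)$ nothing follows about $\dim L_1+\dim L_2-\dim(L_1\vee L_2)$.

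This is not a harmless slip, because for an arbitrary choice of $V,W$ the flat $\C(V\cap W,z)$ can be strictly larger than the join. Take $\C=\F_{q^m}(1,\alpha,\alpha^2)$ with $m\ge 3$ and $1,\alpha,\alpha^2$ independent over $\F_q$, so $\M_\C=\mathcal{U}_{3,1}$. Let $V=\langle\mathbf{e}_1\rangle$ and $W=\langle\mathbf{e}_2\rangle$. Then $L_1=L_2=\{z\}$ and $L_1\vee L_2=\{z\}$, but $\C(V\cap W,z)=\C$ has dimension $1$. Your argument would only yield $0\ge -1$, not the claimed $0\ge 0$.

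The paper's proof takes the direction you need: it identifies the join exactly as $\C(V\cap W,z)$. Strictly, its justification ($U\le V$ and $U\le W$ for any containing flat $\C(U,z)$) is only valid once $V,W$ are chosen maximal, e.g.\ closed in $\M_\C$.

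You can repair your argument with the tools you already use, as follows.
\begin{enumerate}
\item You showed $L_1\vee L_2$ is a flat through $z$, so write $L_1\vee L_2=\C(U,z)$.
\item Since $L_1\subseteq\C(U,z)$, Proposition~\ref{prop:intersectflats} gives $L_1=L_1\cap\C(U,z)=\C(V+U,z)$. Likewise $L_2=\C(W+U,z)$.
\item Put $V'=V+U$ and $W'=W+U$. Then $U\le V'\cap W'$, which gives $\C(V'\cap W',z)\subseteq\C(U,z)=L_1\vee L_2$.
\item Your own containment argument, applied to $V',W'$, gives the reverse inclusion. Hence $L_1\vee L_2=\C(V'\cap W',z)$ exactly, so $\dim(L_1\vee L_2)=k-\rho_\C(V'\cap W')$.
\item Apply submodularity to $V',W'$, using $L_1\cap L_2=\C(V'+W',z)$. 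This yields the first inequality.
\end{enumerate}

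Equivalently, replace $V,W$ by their closures in $\M_\C$. This works because $\C(U,z)\supseteq\C(V,z)$ forces $\C(U+V,z)=\C(V,z)$, hence $\rho_\C(U+V)=\rho_\C(V)$.

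Your second inequality, $\dim(L_1\vee L_2)\le\dim\C$, is fine as written.
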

\begin{proof}
Choose an element $z\in L_1 \cap L_2$ and choose $V,W\leq \F_q^n$ such that $L_1 = \C(V,z)$ and $L_2=\C(W,z)$. By Proposition~\ref{prop:intersectflats} then $L_1 \cap L_2 = \C(V+W,z)$, which has dimension $k-\rho_\C(V+W)$. 

Any flat containing both $L_1$ and $L_2$ must contain $z$, so it must be of the form $\C(U,z)$ for some $U\leq \F_q^n$. For $\C(U,z)$ to contain both flats, then $U\leq V$ and $U\leq W$, so $U\leq V\cap W$. The smallest such flat corresponds to the largest such subspace. The join evaluates exactly to $L_1 \vee L_2 = \C(V\cap W,z)$ with dimension $k-\rho_\C(V\cap W)$. The result then follows by submodularity of $\rho_\C$.
\end{proof}

\begin{remark}
    While the flats of an almost affine code do not globally form a lattice due to disjoint parallel classes, the local structure is highly rigid. For any fixed codeword $z\in \C$, the set of all flats containing $z$ forms a geometric lattice ordered by set inclusion. Because the projection condition $\pi_V(c) = \pi_V(z)$ reverses containment, this local lattice of code flats is anti-isomorphic to the geometric lattice of flats of $\M_\C$.
\end{remark}

\begin{proposition}
    Let $L,L_1,L_2$ be flats of an almost affine code $\C\subseteq \F_{q^m}^n$. If $L_1 \| L_2$ and both $L\cap L_1 \neq \varnothing$ and $L\cap L_2 \neq \varnothing$, then $(L\cap L_1) \| (L\cap L_2)$.
\end{proposition}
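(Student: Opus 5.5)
We prove the statement by writing all flats relative to the same subspaces and then applying Proposition~\ref{prop:intersectflats}. Since $L_1 \| L_2$, the definition of parallelism gives a subspace $Z\leq \F_q^n$ and codewords $x,y\in \C$ with $L_1 = \C(Z,x)$ and $L_2 = \C(Z,y)$. Since $L$ is a flat, we also write $L = \C(V,w)$ for some $V\leq \F_q^n$ and $w\in \C$.

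Next we choose witnesses for the two intersections. By hypothesis we can pick $z_1 \in L\cap L_1$ and $z_2\in L\cap L_2$. Applying Proposition~\ref{prop:intersectflats} to $\C(V,w)\cap \C(Z,x)$ with the common element $z_1$ gives
\[
L\cap L_1 = \C(V+Z, z_1).
\]
Applying it to $\C(V,w)\cap \C(Z,y)$ with $z_2$ gives
\[
L\cap L_2 = \C(V+Z,z_2).
\]

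The two intersections are therefore flats of the form $\C(V+Z,z_1)$ and $\C(V+Z,z_2)$. They share the same defining subspace and differ only in the anchoring codeword, so by the definition of parallel flats $(L\cap L_1)\|(L\cap L_2)$. As a consistency check, both have dimension $k-\rho_\C(V+Z)$ by Proposition~\ref{prop:shortenedsize}.

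The only delicate point is that a flat need not have a unique representation $\C(Z,x)$: different subspaces $Z$ can give the same set. Parallelism must therefore be read as the existence of a common subspace $Z$ representing both flats, and the argument fixes one such $Z$ at the start. It also fixes a single representation $\C(V,w)$ of $L$ and uses it for both intersections. With these choices made once and used throughout, the common subspace $V+Z$ appears in both intersections and no further work is needed.
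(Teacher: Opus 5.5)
Your proposal is correct and follows essentially the same argument as the paper: fix a common subspace for the parallel flats $L_1,L_2$ and a single representation of $L$, then apply Proposition~\ref{prop:intersectflats} to both intersections to obtain flats defined by the same subspace $V+Z$. Your added remark about fixing one representation of each flat at the outset addresses a point the paper leaves implicit.
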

\begin{proof}
    Let $U\leq \F_q^n$ and $x\in \C$ such that $L = \C(U,x)$. Since $L_1 \| L_2$, then there exists $V\leq \F_q^n$ and $y_1,y_2\in \C$ such that $L_1 = \C(V,y_1)$ and $L_2 = \C(V,y_2)$. As $L\cap L_1 \neq \varnothing$, there exists $z_1\in L\cap L_1$ such that $L\cap L_1 = \C(U+V,z_1)$. Similarly, $L \cap L_2 = \C(U+V,z_2)$ for some $z_2\in \C$, which proves the claim.
\end{proof}

The following result shows that two hyperplanes are either parallel or intersect in a flat of dimension $k-2$.

\begin{proposition}
    Let $H_1,H_2$ be distinct hyperplanes of an almost affine code $\C$. If $H_1\cap H_2 \neq \varnothing$, then $H_1 \cap H_2$ is a flat of dimension $\dim \C -2$.
\end{proposition}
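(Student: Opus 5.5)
Pick a common codeword $z\in H_1\cap H_2$ and write $H_1=\C(V,z)$ and $H_2=\C(W,z)$ for suitable $V,W\leq\F_q^n$. This is possible because a flat $\C(Z,x)$ equals $\C(Z,z)$ for every $z$ it contains. Being hyperplanes means $\rho_\C(V)=\rho_\C(W)=1$, where $k=\dim\C$. Proposition~\ref{prop:intersectflats} then gives $H_1\cap H_2=\C(V+W,z)$. By Proposition~\ref{prop:shortenedsize} and the discussion following Corollary~\ref{cor:shortened}, this is a flat of dimension $k-\rho_\C(V+W)$. The whole task is therefore to prove $\rho_\C(V+W)=2$.

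For the upper bound, use submodularity (R3):
\[
\rho_\C(V+W)\leq \rho_\C(V)+\rho_\C(W)-\rho_\C(V\cap W)\leq 2-\rho_\C(V\cap W)\leq 2 .
\]
For the lower bound, monotonicity gives $\rho_\C(V+W)\geq \rho_\C(V)=1$. It remains to rule out $\rho_\C(V+W)=1$. Suppose it held. Then $\C(V+W,z)\subseteq \C(V,z)$, and both sides have the same cardinality $q^{m(k-1)}$ by Proposition~\ref{prop:shortenedsize}. Hence $H_1=\C(V+W,z)$, and by the same argument $H_2=\C(V+W,z)$. This gives $H_1=H_2$, contradicting distinctness. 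Therefore $\rho_\C(V+W)=2$, and $H_1\cap H_2$ is a flat of dimension $k-2$.

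The only delicate point is the first step: we need that a flat may be re-anchored at any of its points and that its dimension depends only on the rank of the defining subspace. Both follow directly from the definition $\C(Z,x)=\{c:\pi_Z(c)=\pi_Z(x)\}$ together with Proposition~\ref{prop:shortenedsize}. With that in hand, the rest is a short rank computation.
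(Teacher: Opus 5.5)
Your proof is correct and follows the paper's route. Both arguments anchor the two hyperplanes at a common point $z$, identify $H_1\cap H_2=\C(V+W,z)$ via Proposition~\ref{prop:intersectflats}, and show $\rho_\C(V+W)=2$ using submodularity together with ruling out rank $1$; your cardinality argument ($\C(V+W,z)\subseteq\C(V,z)$ with equal sizes, forcing $H_1=H_2$) is a cleaner justification of the step the paper phrases as ``$V$ and $W$ would define the same parallel class.''
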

\begin{proof}
    Let $V,W\leq \F_q^n$ and $x_1,x_2\in \C$ such that $H_1 = \C(V,x_1)$ and $H_2 = \C(W,x_2)$. Then $\rho_\C(V) = \rho_\C(W)=1$. Furthermore, $\rho_\C(V+W) = 2$, since if this rank were $1$, $V$ and $W$ would define the same parallel class, forcing $H_1$ and $H_2$ to be either identical or strictly disjoint, which contradicts the assumption $H_1 \cap H_2 \neq \varnothing$. As $V$ and $W$ are subspaces of rank 1 with distinct closures, they are independent in $\M_\C$, so $\rho_\C(V+W)=2$. For any $z\in H_1\cap H_2$, the intersection evaluates to $\C(V+W,z)$, and this flat has dimension $\dim \C-\rho_\C(V+W) = \dim \C-2$.
\end{proof}

\begin{definition} \label{def:partialgeometry}
A \emph{partial affine $q$-geometry} is a pair $(A,\mathcal{H})$ where $A$ is a set of points of $\F_{q^m}^n$, and $\mathcal{H}$ is a family of subsets of $A$ called \emph{hyperplanes} satisfying the following:
\begin{enumerate}
    \item For every $v\in \F_q^n \setminus\{0\}$ there exists a \emph{parallel class} $\mathcal{H}_v\subseteq \mathcal{H}$ which partitions $A$, such that each $\alpha\in \F_{q^m}$ uniquely indexes a hyperplane $H_{v,\alpha}\in \mathcal{H}_v$.
    \item For every $\F_q$-basis $\{\gamma_1,\ldots,\gamma_n\}$ of $\F_q^n$ and any $\alpha_1,\ldots,\alpha_n\in \F_{q^m}$ the intersection $\bigcap_{i=1}^n H_{\gamma_i,\alpha_i}$ contains at most one point.
    \item If $u,v\in \F_q^n\setminus\{0\}$ such that $u+v\neq 0$, and $P\in H_{u,\alpha}\cap H_{v,\beta}$, then $P\in H_{u+v,\alpha+\beta}$.
    \item For any $D\subseteq \F_q^n\setminus\{0\}$ and any two sets of hyperplanes $\{H_{v,\alpha_v}\}_{v \in D}$ and $\{ H_{v,\beta_v}\}_{v\in D}$, if both $\bigcap_{v\in D} H_{v,\alpha_v}$ and $\bigcap_{v\in D} H_{v,\beta_v}$ are non-empty, then it holds $|\bigcap_{v\in D} H_{v,\alpha_v}| = |\bigcap_{v\in D} H_{v,\beta_v}|$.
\end{enumerate}
\end{definition}

\begin{remark}
    While Property 2 specifies that the intersection of hyperplanes defined by a basis contains at most one point, Property 1 guarantees that every point in $A$ is defined as such an intersection. Because every parallel class $\mathcal{H}_v$ partitions $A$, any arbitrary point $P \in A$ must belong to exactly one hyperplane within each class. Thus, for any chosen $\F_q$-basis $\{\gamma_1, \ldots, \gamma_n\}$ of $\F_q^n$, there exists a unique sequence of scalars $\alpha_1, \ldots, \alpha_n$ such that $\{P\} = \bigcap_{i=1}^n H_{\gamma_i, \alpha_i}$.
\end{remark}

Here and below, if $u=\lambda v$ with $\lambda \in \F_q^\times$, then the corresponding hyperplane classes are identified by the reindexing $\alpha\mapsto \lambda \alpha$; equivalently, $H_{u,\alpha} = H_{v,\alpha \lambda^{-1}}$. In particular, parallelism depends only on the one-dimensional subspace $\langle v\rangle_{\F_q}$.

\begin{definition} \label{def:parallelism}
 Let $(A,\mathcal{H})$ be a partial affine $q$-geometry. Two hyperplanes $H_{u,\alpha}$ and $H_{v,\beta}$ are defined to be \emph{parallel}, denoted $H_{u,\alpha}\| H_{v,\beta}$, if there exists a scalar $\lambda\in  \F_q^\times$ such that $u=\lambda v$.
\end{definition}

One may readily verify that parallelism defines an equivalence relation on the family of hyperplanes $\mathcal{H}$. Furthermore, if $u= \lambda v$, then the parallel classes of $\mathcal{H}_u$ and $\mathcal{H}_v$ define identical partitions on $A$.

\begin{definition}

Let $V\leq \F_q^n$ have a basis $\{\nu_1,\ldots,\nu_d\}$. A flat associated with $V$ is any non-empty intersection of the form $f= \bigcap ^d_{i=1} H_{\nu_i,\alpha_i}$ for some $\alpha_1,\ldots,\alpha_d\in \F_{q^m}$. Two flats $f_1$ and $f_2$ are \emph{parallel} if they are defined by the same subspace $V$, denoted $f_1\| f_2$.
\end{definition}

One may again readily verify that parallelism defines an equivalence relation on the set of all flats of a given dimension $d$. Furthermore, the equivalence class of flats defined by a fixed subspace $V$ partitions the point set $A$ into blocks of equal size.

\begin{proposition}
    Let $\C\subseteq \F_{q^m}^n$ be a simple almost affine code. For all $v\in \F_q^n\setminus \{0\}$ and $\alpha \in \F_{q^m}$ define $H_{v,\alpha} = \{ c\in \C\mid \langle c,v\rangle = \alpha\}$ and $\mathcal{H} = \{H_{v,\alpha} \mid v\in \F_{q}^n\setminus\{0\}, \,\alpha \in \F_{q^m}\}$. Then $(\C,\mathcal{H})$ is a partial affine $q$-geometry.
\end{proposition}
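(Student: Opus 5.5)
We verify the four axioms of Definition~\ref{def:partialgeometry} in turn for $A=\C$ and the hyperplanes $H_{v,\alpha}=\{c\in\C\mid \langle c,v\rangle=\alpha\}$, where $\langle c,v\rangle=\sum_i c_iv_i\in\F_{q^m}$. The link to the earlier machinery is that $H_{v,\alpha}$ is a fibre of $\pi_{\langle v\rangle}$. By Lemma~\ref{lem:projectkernel} with $G_V=v^\top$, two codewords $c,c'$ satisfy $\langle c,v\rangle=\langle c',v\rangle$ if and only if $c-c'\in \langle v\rangle^\perp\otimes_{\F_q}\F_{q^m}$. Hence every non-empty $H_{v,\alpha}$ is a flat $\C(\langle v\rangle,x)$. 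More generally, a non-empty intersection $\bigcap_{v\in D}H_{v,\alpha_v}$ equals $\C(\langle D\rangle,z)$ for any $z$ in it, by iterating Proposition~\ref{prop:intersectflats}.

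\emph{Property 1.} For fixed $v\neq 0$, the sets $H_{v,\alpha}$ with $\alpha\in\F_{q^m}$ are the fibres of the map $c\mapsto\langle c,v\rangle$, so they partition $\C$ and are indexed by $\alpha$. Simplicity is used here. Since $\M_\C$ has no loops, $\rho_\C(\langle v\rangle)=1$, so $|\pi_{\langle v\rangle}(\C)|=q^m$. Thus every $\alpha\in\F_{q^m}$ is attained and every $H_{v,\alpha}$ is non-empty, so each $\alpha$ genuinely indexes a hyperplane, and it is a hyperplane in the sense of Section~\ref{sec:geometry}. Simplicity also guarantees that distinct one-dimensional subspaces give distinct parallel classes, since there are no two-dimensional circuits. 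I expect this is the main reason the hypothesis is included, although only looplessness is strictly needed for the axioms as stated.

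\emph{Property 2.} If $\{\gamma_i\}$ is a basis and $c,c'$ both lie in $\bigcap_i H_{\gamma_i,\alpha_i}$, then $\langle c-c',\gamma_i\rangle=0$ for all $i$. By linearity of the pairing in the second argument, $c-c'\in(\F_q^n)^\perp\otimes\F_{q^m}=\{0\}$, so $c=c'$.

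\emph{Property 3.} This is bilinearity: $\langle P,u+v\rangle=\langle P,u\rangle+\langle P,v\rangle=\alpha+\beta$.

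\emph{Property 4.} If both intersections are non-empty, with $z$ in the first and $z'$ in the second, then they equal $\C(\langle D\rangle,z)$ and $\C(\langle D\rangle,z')$. By Proposition~\ref{prop:shortenedsize} both have cardinality $q^{m(\dim\C-\rho_\C(\langle D\rangle))}$.

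The only step needing care is the identification of intersections with flats $\C(\langle D\rangle,z)$ in Property~4. This requires checking that the kernel for the set $D$ is $\bigcap_{v\in D}\langle v\rangle^\perp\otimes\F_{q^m}=\langle D\rangle^\perp\otimes\F_{q^m}$, which is exactly the argument in the proof of Proposition~\ref{prop:intersectflats}.
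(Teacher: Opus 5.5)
Your proposal is correct and follows the paper's proof essentially step for step. Looplessness gives $|\pi_{\langle v\rangle}(\C)|=q^m$ for Property~1, injectivity of $c\mapsto(\langle c,\gamma_i\rangle)_i$ gives Property~2, and bilinearity gives Property~3; for Property~4, identifying non-empty intersections with the flats $\C(\langle D\rangle,z)$ and applying Proposition~\ref{prop:shortenedsize} matches the paper. Your remark that only looplessness enters the four axioms also fits the paper's closing paragraph, which justifies simplicity through the well-definedness of parallelism (Definition~\ref{def:parallelism}) rather than through the axioms.
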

\begin{proof}
    For Property 1 of Definition~\ref{def:partialgeometry}, fix $v\in \F_q^n\setminus \{0\}$. Since $\C$ is simple, $\M_\C$ has no loops, so $\rho_\C(\langle v\rangle _{\F_q})=1$. As $\C$ is almost affine, it follows that $|\pi_{\langle v \rangle}(\C)| = q^m$. The map $c\mapsto \langle c,v\rangle $ has an image in $\F_{q^m}$ of size $q^m$. Hence this map is surjective, and for each $\alpha \in \F_{q^m}$ the set $H_{v,\alpha} = \{c\in \C \mid \langle c,v\rangle = \alpha\}$ is non-empty. Therefore the sets $H_{v,\alpha}$ partition $\C$.
    
    For Property 2, let $\{\gamma_1,\ldots,\gamma_n\}$ be an $\F_q$-basis of $\F_q^n$. The map $T\colon \F_{q^m}^n\rightarrow \F_{q^m}^n$ defined by $T(c) = \left( \langle c,\gamma_1\rangle,\ldots,\langle c,\gamma_n\rangle\right)$ is an $\F_{q^m}$-linear isomorphism, so the system $\langle c,\gamma_i\rangle = \alpha_i$ has at most one solution in $\F_{q^m}^n$, and therefore at most one solution in $\C$.

    Property 3 follows immediately by bilinearity of the inner product. 

    For Property 4, let $D\subseteq \F_q^n\setminus\{0\}$, let $V= \langle D\rangle_{\F_q}$, and let $\{\nu_1,\ldots,\nu_d\}$ be an $\F_q$-basis of $V$ contained in $D$. For any choice of $\alpha_1,\ldots,\alpha_d \in \F_{q^m}$, the intersection $I_\alpha = \bigcap ^d _{i=1}H_{\nu_i,\alpha_i}$ is exactly the set of codewords $c\in \C$ satisfying $\langle c,\nu_i\rangle = \alpha_i$. By linearity, these basis constraints dictate the inner products for all remaining vectors in $D$. Consequently, a non-empty intersection over all of $D$ is equal to $I_\alpha$. If $I_\alpha$ is non-empty, then $I_\alpha = \C(V,x)$ for any $x\in I_\alpha$. By Proposition~\ref{prop:shortenedsize}, $|I_\alpha| = q^{m(\dim \C - \rho_\C(V))}$, which depends only on $V$, not on the choice of $\alpha$. This proves Property 4.

    Finally, the requirement that $\mathcal{C}$ is simple is strictly necessary. If $\mathcal{C}$ were not simple, there would exist linearly independent vectors $u, v \in \mathbb{F}_q^n$ such that $\rho_{\mathcal{C}}(\langle u, v \rangle) = 1$. This forces $H_{u, \alpha} = H_{v, \beta}$ for some scalars $\alpha, \beta \in \mathbb{F}_{q^m}$, meaning $u$ and $v$ define identical partitions on $\mathcal{C}$. This contradicts Definition~\ref{def:parallelism} being an equivalence relation, as it assigns the same set of codewords to two distinct, non-parallel classes.
\end{proof}

\begin{proposition}
    Let $(A,\mathcal{H})$ be a partial affine $q$-geometry. Fix an $\F_q$-basis $\{\gamma_1,\ldots,\gamma_n\}$ of $\F_q^n$. For each $P\in A$, let $\alpha_i\in \F_{q^m}$ be the unique scalar such that $P \in H_{\gamma_i,\alpha_i}$ for $i=1,\ldots,n$. Let $\phi_E \colon A \rightarrow \F_{q^m}^n$ such that $\phi_E(P) = (\alpha_1,\ldots,\alpha_n)$. Then $\phi_E$ is injective and $\C = \{\phi_E(P) \mid P \in A\}$ is a simple almost affine code.
\end{proposition}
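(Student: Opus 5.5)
Injectivity of $\phi_E$ is immediate from Property~2: if $\phi_E(P)=\phi_E(Q)=(\alpha_1,\ldots,\alpha_n)$, then both $P$ and $Q$ lie in $\bigcap_i H_{\gamma_i,\alpha_i}$, which contains at most one point, so $P=Q$.

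For the almost affine property I would make the following plan. First I would extend the coordinates linearly. By Property~3 together with the scalar reindexing convention $H_{\lambda v,\alpha}=H_{v,\lambda^{-1}\alpha}$, the label map $\ell_v(P):=\alpha$, where $P\in H_{v,\alpha}$, is $\F_q$-linear in $v$. Concretely, $\ell_{u+v}=\ell_u+\ell_v$ whenever $u+v\neq0$, and $\ell_{\lambda v}=\lambda\ell_v$. Writing $v=\sum_i\lambda_i\gamma_i$ then gives $\ell_v(P)=\sum_i\lambda_i(\phi_E(P))_i=\langle \phi_E(P),\tilde v\rangle$ for a fixed $\F_q$-linear change of coordinates $v\mapsto\tilde v$. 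Composing with that change of coordinates (which only yields an equivalent code, cf.\ Proposition~\ref{prop:equivalence}), I may assume $\ell_v(P)=\langle\phi_E(P),v\rangle$. Thus $H_{v,\alpha}$ corresponds exactly to $\{c\in\C\mid\langle c,v\rangle=\alpha\}$.

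Next I would compute projection sizes. For $V\le\F_q^n$ with basis $\nu_1,\ldots,\nu_d$, Lemma~\ref{lem:projectkernel} shows that $\pi_V(c)$ is determined by, and determines, $(\langle c,\nu_i\rangle)_i$. So the fibres of $\pi_V$ on $\C$ are exactly the nonempty flats $\bigcap_i H_{\nu_i,\alpha_i}$. By Property~4 with $D=\{\nu_1,\ldots,\nu_d\}$, all these fibres have a common size $s_V$, and hence $|\pi_V(\C)|=|\C|/s_V$.

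The main obstacle is showing that $|\pi_V(\C)|$ is a power of $q^m$; Property~4 alone only gives a divisor of $|\C|$. I would attack this by induction along a chain, adding one basis vector $v$ at a time to pass from $V$ to $V+\langle v\rangle$. Each fibre of $\pi_V$ splits into fibres of $\pi_{V+\langle v\rangle}$, all of the same size, so the ratio $t=|\pi_{V+v}(\C)|/|\pi_V(\C)|$ is an integer between $1$ and $q^m$. To force $t\in\{1,q^m\}$, I would use Property~1: every label $\alpha\in\F_{q^m}$ occurs for $v$. Within a fibre of $\pi_V$, the set of attained labels $S$ has size $t$, and by Property~4 this size is the same for every fibre. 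If $t>1$, then by Property~3 the label sets attained inside a flat are closed under differences, since combining hyperplanes for $v$ and $-v$ shifts labels. Using the $\F_{q^m}$-structure I would then try to show that $S$ is a coset of an $\F_{q^m}$-subspace of $\F_{q^m}$, hence either a point or all of $\F_{q^m}$. This step is delicate, and I expect the argument may in fact need the intended reading of Property~1 as giving the full $\F_{q^m}$ in each nontrivial restriction. I would check it against Example~\ref{exmp:almostaffine} before committing.

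Finally, simplicity follows from Property~1 and the equivalence of parallelism. Each $\langle v\rangle$ has $q^m$ labels, so there are no loops. If two non-proportional vectors $u,v$ satisfied $\rho(\langle u,v\rangle)=1$, then $H_{u,\cdot}$ and $H_{v,\cdot}$ would give identical partitions. That would contradict distinct classes for non-parallel directions, mirroring the last paragraph of the preceding proposition.
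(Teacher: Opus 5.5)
Your injectivity argument, the linearisation of labels, the identification of fibres of $\pi_V$ with nonempty flats of common size via Property~4, and the simplicity argument all match the paper. Your change of coordinates $v\mapsto\tilde v$ is in fact more careful than the paper's claim that the label equals $\langle c,v\rangle$, which needs a basis orthonormal for the form.

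The genuine gap is the step you flagged, forcing $t\in\{1,q^m\}$, and your proposed route cannot close it. Properties~2 and~3, together with the reindexing convention, hold automatically for \emph{every} subset $A\subseteq\F_{q^m}^n$ with $H_{v,\alpha}=\{c\in A\mid\langle c,v\rangle=\alpha\}$. So they say nothing about which labels occur together inside a flat. They give neither closure of $S$ under differences nor any compatibility with $\F_{q^m}$-scaling. The axioms only impose two things: every $\pi_V$ has equal-size nonempty fibres on $A$, and each functional $c\mapsto\langle c,v\rangle$ maps $A$ onto $\F_{q^m}$. This does not force powers of $q^m$.

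Here is a counterexample. Take $q=m=n=2$, write $a=a_1+a_2\omega\in\F_4$ with $a_1,a_2\in\F_2$, and let $A=\{(a,b)\in\F_4^2\mid a_1=b_2\}$, with the hyperplanes above for the standard form.
\begin{itemize}
\item $A$ is an additive group of order $8$, and each of $a$, $b$, $a+b$ has a kernel of order $2$ on $A$. Hence Property~1 holds.
\item Property~4 holds because every $\pi_V$ is additive on $A$, so its nonempty fibres are cosets of a single kernel.
\item Distinct directions give distinct partitions.
\end{itemize}
Yet for the standard basis $\phi_E$ is the identity, and $|\pi_{\F_2^2}(A)|=8$ is not a power of $4$. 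In your notation, passing from $\langle e_1\rangle$ to $\F_2^2$ gives $t=2$. The label set on the fibre $a=0$ is $S=\{0,1\}$, an $\F_2$-subspace but not an $\F_4$-subspace. Even closure under differences fails in general: $A'=\F_4^2\setminus\{(a,\omega a)\mid a\in\F_4\}$ satisfies all four properties, with $|A'|=12$ and $t=3$.

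So the proposition is false as stated. The paper's proof has exactly the same hole: it simply asserts that each step multiplies the number of fibres by $1$ or $q^m$, which Properties~3 and~4 do not give.

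Your instinct about a stronger reading of Property~1 is the right repair. Add the axiom that every nonempty flat meets either exactly one or all $q^m$ hyperplanes of each class $\mathcal{H}_v$. This holds for almost affine codes: by Proposition~\ref{prop:shortenedsize}, the number of hyperplanes met is $q^{m(\rho_\C(V+\langle v\rangle)-\rho_\C(V))}$. With this axiom, your chain induction plus Property~4 yields $|\pi_{V+\langle v\rangle}(\C)|\in\{|\pi_V(\C)|,\,q^m|\pi_V(\C)|\}$, which closes the argument.
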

\begin{proof}
    To show injectivity, suppose $\phi_E(P) = \phi_E(Q) = (\alpha_1, \ldots, \alpha_n)$. This implies that both $P$ and $Q$ lie in the intersection $\bigcap_{i=1}^n H_{\gamma_i, \alpha_i}$. By Property 2 of Definition~\ref{def:partialgeometry}, this intersection contains at most one point, forcing $P = Q$. Thus, $\phi_E$ is injective.

    Next, we must prove that the assignment of coordinates respects $\F_q$-linearity across all directions, not just the basis $E$. Let $c = \phi_E(P) = (\alpha_1, \ldots, \alpha_n) \in \mathcal{C}$ and let $v = \sum_{i=1}^n \lambda_i \gamma_i \in \F_q^n \setminus \{0\}$. Since $P \in \bigcap_{i=1}^n H_{\gamma_i, \alpha_i}$, repeated application of the additivity and scaling rules of Property 3 guarantees that $P \in H_{v, \beta}$, where $\beta = \sum_{i=1}^n \lambda_i \alpha_i$. Notice that $\beta$ evaluates to $\langle c, v \rangle$. Therefore, for any codeword $c = \phi_E(P)$ and any $v \in \F_q^n \setminus \{0\}$, the inner product $\langle c, v \rangle$ retrieves the label of the hyperplane $H_{v, \beta} \in \mathcal{H}_v$ containing $P$.

    To verify that $\mathcal{C}$ is almost affine, let $V \le \F_q^n$ be an arbitrary subspace and choose an $\F_q$-basis $V = \langle \nu_1, \ldots, \nu_d \rangle_{\F_q}$. For $i=1,\ldots,d$, let $V_i = \langle \nu_1, \ldots, \nu_i \rangle_{\F_q}$ and let $\mathcal{C}_i = \pi_{V_i}(\mathcal{C})$. For the base case $V_1$, Property 1 yields $|\mathcal{C}_1| = q^m$. When passing from $V_{i-1}$ to $V_i$, the fibers of $\pi_{V_i}$ are formed by evaluating the fibers of $\pi_{V_{i-1}}$ on $\nu_i$. By our linearity argument above, this evaluation corresponds to intersecting the existing fibers with the hyperplanes of the parallel class $\mathcal{H}_{\nu_i}$. Property 4 guarantees that all non-empty such intersections have the same size, and Property 3 shows that the pattern of non-empty intersections depends only on the subspace $V_i$, not on the chosen basis. Consequently, each step from $\mathcal{C}_{i-1}$ to $\mathcal{C}_i$ multiplies the number of non-empty fibers by $1$ or $q^m$. In particular, $|\mathcal{C}_i|$ is always a power of $q^m$, so $\mathcal{C}$ is an almost affine rank-metric code.

    Finally, we show that $\mathcal{M}_{\mathcal{C}}$ is simple. Since $|\pi_{\langle v \rangle_{\F_q}}(\mathcal{C})| = q^m$ for every non-zero $v \in \F_q^n$, the code contains no loops. To rule out parallel elements, suppose $u, v \in \F_q^n$ are linearly independent. If their joint projection collapsed to rank 1, meaning $|\pi_{\langle u, v \rangle_{\F_q}}(\mathcal{C})| = q^m$, then the evaluations on $u$ and $v$ would depend on one another. Geometrically, this would force $u$ and $v$ to determine the exact same partition of $A$, contradicting the equivalence relation of parallelism in Definition~\ref{def:parallelism}. Hence, $|\pi_{\langle u, v \rangle_{\F_q}}(\mathcal{C})| = q^{2m}$, meaning there are no 2-dimensional circuits. Therefore, $\mathcal{M}_{\mathcal{C}}$ is simple.
\end{proof}

\begin{corollary}
    There is a one-to-one correspondence between the isomorphism classes of partial affine $q$-geometries and the isomorphism classes of simple almost affine rank-metric codes. \label{cor:equivalencegeometry}
\end{corollary}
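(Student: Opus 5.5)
The plan is to assemble the corollary from the two preceding propositions. They give maps in both directions, and the work is to show these maps descend to isomorphism classes and are mutually inverse there. First we fix the notions of isomorphism. Two partial affine $q$-geometries $(A,\mathcal{H})$ and $(A',\mathcal{H}')$ are isomorphic if there are a bijection $f\colon A\to A'$, a matrix $M\in \GL_n(\F_q)$ and an additive relabelling (a translation $\alpha\mapsto \alpha+t_v$, linear in $v$) such that $f(H_{v,\alpha}) = H'_{Mv,\alpha+t_v}$. Codes are identified up to the equivalence $\C_2=A\C_1+t$ defined before Proposition~\ref{prop:equivalence}.

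Define $\Phi(\C)=(\C,\mathcal{H}_\C)$ with $H_{v,\alpha}=\{c\mid \langle c,v\rangle=\alpha\}$, and define $\Psi(A,\mathcal{H})=\phi_E(A)$ for a fixed basis $E$. The proof proceeds in three steps.

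First, $\Psi$ is independent of the choice of $E$ up to code equivalence. The linearity argument in the previous proof shows that $P\in H_{v,\langle \phi_E(P),v\rangle}$ for every $v$. Take a second basis $E'=\{\gamma_i'\}$ with $\gamma'_i=\sum_j \mu_{ij}\gamma_j$. Then the $i$th coordinate of $\phi_{E'}(P)$ is $\langle\phi_E(P),\gamma'_i\rangle$, because the hyperplane in $\mathcal{H}_{\gamma_i'}$ containing $P$ is unique. Hence $\phi_{E'}(P)=N\phi_E(P)$ for an invertible matrix $N$ built from the $\mu_{ij}$, and the two codes are equivalent. The same computation shows that an isomorphism of geometries changes the code by some $A\in\GL_n(\F_q)$ and a translation $t=(t_{\gamma_i})_i$. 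Conversely, an equivalence $\C_2=A\C_1+t$ yields the bijection $c\mapsto Ac+t$, which maps $H_{v,\alpha}$ onto $H_{(A^{-1})^\top v,\alpha+\langle t,(A^{-1})^\top v\rangle}$, exactly as in the proof of Proposition~\ref{prop:equivalence}. So $\Phi$ and $\Psi$ are well defined on classes.

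Second, $\Psi\circ\Phi$ is the identity on classes. Using the standard basis $E$, $\phi_E(c)=(\langle c,e_i\rangle)_i=c$ (taking the standard form, or composing with the Gram matrix, which is again an equivalence). Third, $\Phi\circ\Psi$ is the identity on classes. The bijection $\phi_E\colon A\to\C$ sends $H_{v,\beta}$ to $\{c\mid\langle c,v\rangle=\beta\}$ by the linearity argument, because a point lies in exactly one hyperplane of each parallel class. So $\phi_E$ is an isomorphism of geometries.

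The main obstacle is the first step: pinning down a notion of isomorphism for geometries that is neither too coarse nor too fine. The hyperplane labels carry the $\F_{q^m}$-affine data, so an isomorphism must be allowed to relabel labels compatibly with Property 3. I would first try requiring the label shifts $t_v$ to be $\F_q$-linear in $v$; Property 3 forces this anyway for shifts induced by translations, which makes the correspondence exact.
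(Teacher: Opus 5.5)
Your proposal is correct and follows the paper's route. The paper states the corollary without proof, as an immediate consequence of the two preceding propositions (simple code $\mapsto$ geometry via $H_{v,\alpha}=\{c\mid\langle c,v\rangle=\alpha\}$, and geometry $\mapsto$ simple code via $\phi_E$), while you additionally fix a notion of isomorphism of geometries matching the code equivalence $\C\mapsto A\C+t$ and check what the paper leaves implicit: basis-independence of $\phi_E$, compatibility of both maps with isomorphisms, and that the two maps are mutually inverse on classes.
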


An explicit construction of a simple, strictly almost affine rank-metric code can be derived from Additive Generalized Twisted Gabidulin (AGTG) codes, introduced in \cite{additiverankmetriccodes}.

\begin{example}
Let $q_0$ be a prime power, and let $n,k,s,u,h\in \mathbb{N}$ be positive integers satisfying $\gcd(n,s)=1$, $k<n$, and denote $q=q_0^u$. Let $\eta\in \F_{q^n}$ such that $N_{q^{sn}/q_0^s}(\eta) \neq (-1)^{nku}$, where $N_{q^{sn}/q_0^s}$ denotes the field norm over $\F_{q^{sn}}/\F_{q_0^s}$. Then 
\[
    \mathcal{A}_{k,s,q_0}(\eta,h) = \left\{ \alpha_0 x +\alpha_1 x^{q^s} + \ldots + \alpha_{k-1}x^{q^{s(k-1)}} + \eta \alpha_0^{q^h_0}x^{q^{sk}} \;\middle|\; \alpha_0,\ldots,\alpha_{k-1}\in \F_{q^n} \right\}
\]
is an Additive Generalized Twisted Gabidulin code. By evaluating these polynomials on a fixed $\F_q$-basis of $\F_{q^n}$, the code can be represented as a set of $n\times n$ matrices over $\F_q$, which natively maps to a vector rank-metric code $\C\subseteq \F_{q^n}^n$. By construction, AGTG codes are always $\F_{q_0}$-linear, but they are strictly not $\F_q$-linear whenever $u\nmid h$.

We instantiate this family with the parameters $q_0=2$, $u=2$ (so $q=4$), $n=5$, $k=2$, $s=2$, and $h=1$, alongside a valid non-zero twist parameter $\eta\in \F_{4^5}$. This yields an $\F_2$-linear code $\C\subseteq \F_{1024}^5$. By the AGTG construction, this is an MRD code with minimum rank distance $d=4$. This code is therefore simple as the code contains no codewords of rank 1 or 2. Furthermore, it is almost affine as it is MRD, and it is strictly almost affine as it is not $\F_{1024}$-linear. Thus, it induces a proper partial affine $q$-geometry. \label{exmp:AGTG}
\end{example}

To highlight the difficulty of finding codes that are simultaneously simple and strictly almost affine, we now contrast the AGTG family with classical geometric constructions. Specifically, the explicit code presented earlier in Example~\ref{exmp:almostaffine} is a punctured instance of a broader class of codes derived from finite translation planes. While these classical structures naturally yield strictly almost affine codes, we will demonstrate that their rank-metric adaptations inherently fail to be simple.

Constructing codes from these geometric incidence structures is a classical technique. It is a well-established geometric property that affine translation planes can be coordinatized by proper finite semifields, yielding line equations determined by non-associative bilinear operations \cite{projectiveplanes,Assmus_Key_1992}. Furthermore, evaluating the points of an affine plane across its parallel classes naturally produces a net. It is a standard combinatorial result that such nets are equivalent to orthogonal arrays, which yield MDS codes in the Hamming metric. In \cite{AlmostAffineCodes} it was then shown that these structures yield almost affine codes in the Hamming metric.

In the following construction, we generalize the mechanism behind Example~\ref{exmp:almostaffine}. In particular, Example~\ref{exmp:almostaffine} is a puncturing of a code induced by Theorem~\ref{thm:construction2dim}. We explicitly apply this classical coordinate evaluation map to an arbitrary semifield translation plane and lift the resulting non-$\F_{q^m}$-linear additive code into the rank metric, and we demonstrate that this code is also almost affine as a rank-metric code.

Recall that a \emph{finite semifield} $(\S,+,\circ)$ is a finite non-associative division ring with multiplicative identity element. By definition, $\S$ contains a field $\F_q$ in its center, meaning the elements of $\F_q$ commute and associate with all other elements in $\S$. As scalar multiplication can be restricted to this central field, $\S$ forms a finite-dimensional vector space over $\F_q$. If we denote the dimension of this vector space by $m$, the order of the semifield is $|\S| = q^m$. Consequently, we can identify the underlying additive group and set of $\S$ with the field $\F_{q^m}$. Furthermore, because the scalars $\lambda \in \F_q$ associate and commute with all elements, we have $\lambda (x\circ y) = (\lambda x)\circ y = x\circ (\lambda y)$ for all $x,y\in \S$. Thus, $\circ$ is an $\F_q$-bilinear map. A semifield is called \emph{proper} if its multiplication operation is strictly non-associative. 

\begin{theorem}
Let $\S$ be a finite proper semifield of order $q^m$, and let $$\C_\S = \{ (x,(y-x\circ a))_{a\in \S}\mid (x,y)\in \S^2\}.$$ Then $\C_\S \subseteq \F_{q^m}^n$ is an $\F_q$-linear, strictly almost affine rank-metric code with parameters $n=q^m+1$, $\dim \C_\S = 2$, and $d(\C_\S)=1$. \label{thm:construction2dim}
\end{theorem}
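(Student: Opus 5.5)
The plan is to work entirely with the $\F_q$-linear parametrisation $\Phi\colon \S^2\to \F_{q^m}^n$, $(x,y)\mapsto (x,(y-x\circ a)_{a\in\S})$. Here $\S$ is identified additively with $\F_{q^m}$, and coordinates are indexed by a symbol $0$ together with the elements $a\in\S$, so $n=q^m+1$.

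\textbf{Linearity, size and dimension.} Since $\circ$ is $\F_q$-bilinear, $\Phi$ is $\F_q$-linear. It is injective, because the first coordinate gives $x$ and the coordinate $a=0$ gives $y$. Hence $\C_\S$ is $\F_q$-linear with $|\C_\S|=q^{2m}$, and $\dim\C_\S=2$ once almost affinity is shown.

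\textbf{Almost affinity.} Fix $V\le\F_q^n$. By Lemma~\ref{lem:projectkernel}, $|\pi_V(\C_\S)|$ equals the size of the image of $(x,y)\mapsto(\langle \Phi(x,y),v\rangle)_{v\in V}$, which is $q^{2m}/|K_V|$ for the kernel $K_V$ of this map.

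For $v=(v_0,(v_a)_a)$, bilinearity and centrality of $\F_q$ give
\[
\langle\Phi(x,y),v\rangle=\lambda_v y + x\circ\mu_v,\qquad \lambda_v=\sum_a v_a\in\F_q,\quad \mu_v=v_0e-\sum_a v_a a\in\S,
\]
where $e$ is the identity of $\S$. The map $v\mapsto(\lambda_v,\mu_v)$ is $\F_q$-linear, so let $W$ be the image of $V$ in $\F_q\times\S$. Then $K_V=\{(x,y)\mid \lambda y+x\circ\mu=0\ \forall(\lambda,\mu)\in W\}$. The key tool is that $\S$ has no zero divisors. The case analysis on $W$ is as follows.
\begin{itemize}
\item If $W=0$, then $K_V=\S^2$.
\item If $W=\langle(\lambda,\mu)\rangle$ with $\lambda\ne0$, then $y=-\lambda^{-1}x\circ\mu$, so $|K_V|=q^m$.
\item If $W=\langle(0,\mu)\rangle$ with $\mu\ne0$, then $x=0$, so $|K_V|=q^m$.
\item If $\dim W\ge2$, some nonzero combination has $\lambda=0$, forcing $x=0$. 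Then either some element has $\lambda\neq0$, giving $y=0$ and $K_V=0$, or all $\lambda=0$, giving $|K_V|=q^m$.
\end{itemize}
Hence $|\pi_V(\C_\S)|\in\{1,q^m,q^{2m}\}$, so $\C_\S$ is almost affine. Taking $V=\F_q^n$ gives rank $2$.

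\textbf{Minimum distance.} Because the code is $\F_q$-linear, $d$ is the minimum rank of a nonzero codeword. The codeword $\Phi(0,y)$ with $y\ne0$ is $(0,y,\dots,y)$, whose support is $\langle(0,1,\dots,1)\rangle$. So $d(\C_\S)=1$.

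\textbf{Strictness.} Since $0\in\C_\S$, being affine would mean being $\F_{q^m}$-linear. Suppose $\beta\Phi(x,0)\in\C_\S$ for all $\beta\in\F_{q^m}$ and $x\in\S$, say $\beta\Phi(x,0)=\Phi(x',y')$.
\begin{itemize}
\item Comparing the first coordinate gives $x'=\beta x$.
\item Comparing the coordinate $a=0$ gives $y'=0$.
\item The remaining coordinates then force $(\beta x)\circ a=\beta(x\circ a)$ for all $a$.
\end{itemize}
Setting $x=e$ yields $(\beta e)\circ a=\beta a$ (field product). Writing $b=\beta e$, this says $b\circ a=be^{-1}a$ in $\F_{q^m}$ for all $b,a$. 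Then $\circ$ is field multiplication twisted by the constant $e^{-1}$, which is associative, contradicting properness of $\S$.

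The main obstacle I expect is the almost affinity step: correctly rewriting an arbitrary functional $v$ as the pair $(\lambda_v,\mu_v)$ using centrality of $\F_q$ and the identity $e$, and making the case analysis on $W$ exhaustive. The strictness step needs some care with the additive identification $\S\cong\F_{q^m}$, since $e$ need not equal $1$, but it is handled by the twist above.
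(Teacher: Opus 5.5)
Your proof is correct and follows the paper's route: the same kernel equation $\lambda_v y + x\circ\mu_v=0$ with the same three possible projection sizes $1,q^m,q^{2m}$ (you only organise the cases through the image $W\subseteq\F_q\times\S$), the same rank-one codeword $(0,y,\dots,y)$, and the same scaling test on $c(x,0)$ for strictness. Your specialisation $x=e$, which forces $\circ$ to be the associative product $b\circ a=be^{-1}a$, rigorously proves a step the paper only asserts, namely that properness yields some $\beta(x\circ a)\neq(\beta x)\circ a$; you should also rename your extra coordinate index (the paper uses $\infty$), since the symbol $0$ clashes with the coordinate indexed by $0\in\S$.
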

\begin{proof}
First, we determine the parameters of the code. The length of the code clearly holds as we identify $\mathcal{S}$ with $\mathbb{F}_{q^m}$, and we identify the coordinates with $\{\infty\} \cup \mathcal{S}$. The size of the code is $|\mathcal{S}|^2 = (q^m)^2$, which implies $\dim_{\mathbb{F}_{q^m}} \mathcal{C}_{\mathcal{S}} = 2$. Consider the encoding map $c\colon \mathcal{S}^2 \rightarrow \mathcal{S}\times \mathcal{S}^{q^m}$ given by $(x,y)\mapsto (x,(y-x\circ a)_{a\in \mathcal{S}})$. As the semifield multiplication is $\mathbb{F}_q$-bilinear, the encoding map $c(x,y)$ is an $\mathbb{F}_q$-linear transformation. Consequently, $\mathcal{C}_{\mathcal{S}}$ is an $\mathbb{F}_q$-linear subspace of $\mathbb{F}_{q^m}^n$. To determine the minimum distance, let $x=0$ and $y\in \mathcal{S}^\times$. The resulting codeword is $c(0,y) = (0,(y-0)_{a\in \mathcal{S}})=(0,y,\ldots,y)$. The rank weight of this codeword is clearly $1$, and as $\mathcal{C}_{\mathcal{S}}$ contains the zero codeword, the minimum rank distance of $\mathcal{C}_{\mathcal{S}}$ is 1.

To prove that $\mathcal{C}_{\mathcal{S}}$ is an almost affine rank-metric code, consider a subspace $V\leq \mathbb{F}_q^n$. Then $|\pi_V(\mathcal{C}_{\mathcal{S}})|= \frac{q^{2m}}{|\ker \pi_V|}$, and the kernel consists of codewords satisfying $\langle c(x,y),v\rangle =0$ for all $v\in V$. For a vector $v=(v_{\infty},(v_a)_{a\in \mathcal{S}}) \in \mathbb{F}_q^n$, this condition implies
\begin{align}
    v_\infty x + \sum_{a\in \mathcal{S}} v_a(y-x\circ a) =0. \label{eq:orthocode1}
\end{align}
As $v_a\in \mathbb{F}_q$, and the semifield is an $\mathbb{F}_q$-algebra, the scalars commute with $\circ$, so $v_a(x\circ a ) = x\circ (v_a a)$. Now, denote $A_v = \sum_{a\in \mathcal{S}} v_a \in \mathbb{F}_q$ and $B_v = v_\infty - \sum_{a\in \mathcal{S}} v_a a \in \mathcal{S}$. Substituting this into \eqref{eq:orthocode1} we obtain
\begin{align}
    A_v y + x\circ B_v = 0. \label{eq:orthocode2}
\end{align}

We consider two cases. For the first case, suppose $A_v =0$ for all $v\in V$. Then \eqref{eq:orthocode2} reduces to $x\circ B_v =0$. If $B_v = 0$ for all $v\in V$, then $\ker \pi_V = \mathcal{S}^2$, so $|\pi_V(\mathcal{C}_{\mathcal{S}})| = 1$. If $B_v \neq 0$ for some $v\in V$, then $x=0$ as $\mathcal{S}$ has no zero divisors. This implies that $y$ is a free variable, so the kernel is $\{0\} \times \mathcal{S}$, which yields $|\pi_V(\mathcal{C}_{\mathcal{S}})|= q^m$.

For the second case, suppose $A_w \neq 0$ for some $w\in V$. Then $y= -x\circ (B_w A_w^{-1})$. For all other $v\in V\setminus \{w\}$, \eqref{eq:orthocode2} then reduces to
\[
    0 = A_v(-x\circ (B_wA_w^{-1}))+ x\circ B_v = x\circ (B_v - A_v B_w A_w^{-1}).
\]
Now, if $B_v - A_v B_w A_w^{-1}=0$ for all $v\in V\setminus \{w\}$, then $x$ is a free variable and determines $y$. The kernel is therefore isomorphic to $\mathcal{S}$, yielding $|\pi_V(\mathcal{C}_{\mathcal{S}})|= q^m$. If this term is non-zero for any $v\in V\setminus \{w\}$, it forces $x=0$ and thus $y=0$. The kernel is trivial, so $|\pi_V(\mathcal{C}_{\mathcal{S}})| = q^{2m}$. Because all projections yield sizes of $1$, $0q^m$, or $q^{2m}$, $\mathcal{C}_{\mathcal{S}}$ is almost affine.

Finally, we prove that $\mathcal{C}_{\mathcal{S}}$ is strictly almost affine by showing it is not an affine space over $\mathbb{F}_{q^m}$. Suppose for contradiction that it is. Because encoding $(0,0)$ yields the zero vector, $\mathcal{C}_{\mathcal{S}}$ contains the origin, meaning any such affine space must be a linear vector space over the field $\mathbb{F}_{q^m}$. Let $\cdot$ denote the standard field multiplication of $\mathbb{F}_{q^m}$. For $\mathcal{C}_{\mathcal{S}}$ to be an $\mathbb{F}_{q^m}$-linear subspace, we must have $\lambda \cdot c \in \mathcal{C}_{\mathcal{S}}$ for any $c \in \mathcal{C}_{\mathcal{S}}$ and any scalar $\lambda \in \mathbb{F}_{q^m}$.

Consider the valid codeword $c(x,0) = (x, (-x \circ b)_{b \in \mathcal{S}})$. Multiplying this codeword by $\lambda$ using the field scalar multiplication yields the vector $v = (\lambda \cdot x, (\lambda \cdot (-x \circ b))_{b \in \mathcal{S}})$. If $v$ belongs to the code, it must equal $c(\hat{x}, \hat{y})$ for some $\hat{x}, \hat{y} \in \mathcal{S}$. Matching the first coordinate dictates $\hat{x} = \lambda \cdot x$. Matching the coordinate at $b=0$ yields $\lambda \cdot (-x \circ 0) = \hat{y} - (\lambda \cdot x) \circ 0$, which forces $\hat{y} = 0$. Therefore, if $v$ is in the code, it must be $c(\lambda \cdot x, 0)$. However, examining an arbitrary coordinate $a \in \mathcal{S}^{\times}$, the vector $v$ contains $-\lambda \cdot (x \circ a)$, whereas $c(\lambda \cdot x, 0)$ contains $-(\lambda \cdot x) \circ a$. Because $\mathcal{S}$ is a proper semifield, its multiplication $\circ$ is strictly distinct from the field multiplication $\cdot$. Thus, there exist elements where $\lambda \cdot (x \circ a) \neq (\lambda \cdot x) \circ a$, meaning the coordinates do not match. The scaled vector is not in the code, proving $\mathcal{C}_{\mathcal{S}}$ is not $\mathbb{F}_{q^m}$-linear.
\end{proof}
\begin{remark}
The behavior of $\mathcal{C}_{\mathcal{S}}$ depends strongly on the choice of metric, reflecting a contrast between classical combinatorial design and rank-metric geometry. In the Hamming metric, the parallel classes of the translation plane ensure that $\mathcal{C}_{\mathcal{S}}$ forms an orthogonal array of index one, and hence an MDS code. In the rank metric, however, the non-zero parallel classes collapse to identical coordinates over the base field, so the code has rank distance equal to one. Moreover, $\mathcal{C}_{\mathcal{S}}$ is an $\mathbb{F}_q$-linear subspace of $\mathbb{F}_{q^m}$ without being $\mathbb{F}_{q^m}$-linear.
\end{remark}

We now generalize the construction to codes of higher dimensions. To preserve the almost affine property while avoiding the dimension collapse caused by non-associativity in higher dimensions, we restrict the evaluation map to a one-dimensional subspace over the central field $\mathbb{F}_q$.

\begin{theorem} Let $\S$ be a finite proper semifield of order $q^m$, and let $k\geq 2$. Choose a point $p =(p_1,\ldots,p_{k-1})\in \S^{k-1}$ such that there exist elements $\gamma,x\in \S$ satisfying $\gamma \circ (x\circ p_j) \neq (\gamma\circ x)\circ p_j$ for some coordinate index $j$. Define
\[
    \C_{\S,k}(p) = \left\{ \left(x_1,\ldots,x_{k-1},\left( y - \sum^{k-1}_{i=1}x_i \circ (\lambda p_i) \right)_{\lambda \in \F_q} \right) \middle| (x_1,\ldots,x_{k-1},y)\in \S^k \right\}.
\]
Then $\C_{\S,k}(p)$ is an $\F_q$-linear, strictly almost affine rank-metric code with parameters $n=(k-1)+q$, $\dim \C_{\S,k}(p) = k$, and $d(\C_{\S,k}(p)) = 1$. \label{thm:constructionkdim}
\end{theorem}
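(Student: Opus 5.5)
The plan is to mirror the proof of Theorem~\ref{thm:construction2dim}, with the coordinates now indexed by $\{1,\ldots,k-1\}\cup\F_q$. First I will settle the parameters and linearity.
\begin{itemize}
\item The length is $n=(k-1)+q$.
\item The encoding map $c\colon \S^k\to \F_{q^m}^n$, $(x_1,\ldots,x_{k-1},y)\mapsto (x_1,\ldots,x_{k-1},(y-\sum_i x_i\circ(\lambda p_i))_\lambda)$, is $\F_q$-linear by $\F_q$-bilinearity of $\circ$.
\item It is injective, since the $x_i$ are read off the first $k-1$ coordinates and $y$ off the coordinate $\lambda=0$. Hence $|\C_{\S,k}(p)|=q^{mk}$ and $\dim\C_{\S,k}(p)=k$.
\item The codeword $c(0,\ldots,0,y)=(0,\ldots,0,y,\ldots,y)$ with $y\neq0$ has $\Pi$-matrix of rank one. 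Since the code contains $0$, this gives $d=1$.
\end{itemize}

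For the almost affine property I will compute kernels as before. For $v=((v_i)_{i\le k-1},(v_\lambda)_{\lambda\in\F_q})\in\F_q^n$, the condition $\langle c,v\rangle=0$ reads
\[
\sum_i v_i x_i + A_v y - \sum_i x_i\circ\Bigl(\bigl(\textstyle\sum_\lambda v_\lambda\lambda\bigr)p_i\Bigr)=0.
\]
Here $A_v=\sum_\lambda v_\lambda\in\F_q$, and the scalars have been pulled inside $\circ$ using centrality. Set $\mu_v=\sum_\lambda v_\lambda\lambda\in\F_q$. Since $\mu_v$ is a central scalar, $x_i\circ(\mu_v p_i)=\mu_v(x_i\circ p_i)$. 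The functional therefore becomes
\[
\ell_v(x,y)=\sum_i v_i x_i + A_v y - \mu_v\sum_i x_i\circ p_i .
\]
Each map $x_i\mapsto x_i\circ p_i$ is an $\F_q$-linear map $L_i$ of $\S$, and it is bijective when $p_i\neq 0$. So every $\ell_v$ lies in the $\F_q$-span of the $k+1$ maps $\{x_1,\ldots,x_{k-1},\,y,\,\sum_i L_i(x_i)\}$, with all coefficients in $\F_q$.

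The key observation I intend to prove is this. The kernel of $\pi_V$ is the common zero set of $\ell_v$ for $v\in V$, and these functionals are $\F_q$-combinations of the coordinate maps $x_1,\ldots,x_{k-1},y$ together with $T=\sum_i L_i(x_i)$. Moreover, $T$ is itself $\F_q$-linearly independent of the coordinate maps only in a controlled way. My approach is to view each $\ell_v$ as an $\F_{q^m}$-valued map given by a coefficient vector in $\F_q^{k+1}$ applied to the ``vector'' $(x_1,\ldots,x_{k-1},y,T)$. I then perform $\F_q$-Gaussian elimination on the coefficient matrix of a basis of $V$, and argue that each reduced row cuts the solution set by exactly a factor $q^m$ or not at all. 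Concretely:
\begin{itemize}
\item A reduced row with a pivot on some $x_i$ or on $y$ solves for that variable freely over $\S$.
\item A row involving only $T$ imposes $\sum_i L_i(x_i)=0$ on the remaining free variables. Since some remaining free $x_i$ has $L_i$ bijective, or else the row is vacuous, this is again a factor $q^m$.
\end{itemize}
The kernel size is thus $q^{m(k-r)}$, so $|\pi_V(\C)|=q^{mr}$. This is the step I expect to be the main obstacle. One must handle carefully the case where $T$ appears after $x_i$ has been eliminated, making sure the substituted expression remains a sum of bijective (or zero) $\F_q$-linear maps of the free variables. I would first reduce to the case where the pivot variables are eliminated before $T$, and check that substituting affine-$\F_q$ combinations into $L_i$ keeps each resulting condition either trivial or surjective.

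Finally, for strict almost affinity I argue as in Theorem~\ref{thm:construction2dim}. The code contains $0$, so $\F_{q^m}$-affineness would mean $\F_{q^m}$-linearity. Scale the codeword $c(x e_j,0)$, that is, with $x_j=x$ and all other entries zero, by a field scalar. Matching the first $k-1$ coordinates and the coordinate $\lambda=0$ forces the image to be $c(\gamma x e_j,0)$. Comparing coordinate $\lambda=1$ then requires $\gamma\cdot(x\circ p_j)=(\gamma\cdot x)\circ p_j$. The hypothesis on $p$ provides a violation of this, once the scalar product used in the hypothesis is identified with the field multiplication, as done implicitly in the earlier theorem. This gives the desired contradiction.
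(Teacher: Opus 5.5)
Your length, $\F_q$-linearity, dimension and $d=1$ arguments are fine. The almost affine step, however, has a gap exactly where you expected it, and eliminating the variables before $T$, as you propose, is the wrong order. A reduced row may then have its pivot on some $x_i$ and a nonzero $T$-coefficient $c$. Since $T$ involves $x_i$, the row reads e.g.\ $x_1\circ(1+c\,p_1)+\sum_{i\ge 2}x_i\circ(c\,p_i)+\dots=0$. This does not express $x_1$ as an $\F_q$-combination of free variables, and need not determine it at all (take $p_1=-c^{-1}$, allowed when $k\ge 3$). Imposing several such rows one after another also breaks the zero-or-onto dichotomy. After a right division by some $\beta_1$, the next row has coefficients like $R_{\beta'}-R_{\beta'_1}R_{\beta_1}^{-1}R_{\beta}$ (with $R_\beta\colon x\mapsto x\circ\beta$), which are in general neither zero nor invertible. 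Your criterion ``some free $x_i$ has $L_i$ bijective'' is also not the right one: after substitution, the coefficient of a free parameter is an $\F_q$-combination of the $p_i$, which can vanish although every $p_i\ne 0$.

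The missing observation is that $v\mapsto\mu_v$ is $\F_q$-linear, so $T$ can be confined to a single equation by pivoting on $T$ \emph{first}. This is the paper's argument. Put $V_0=\ker(v\mapsto\mu_v)$, which has codimension at most one in $V$. For $v\in V_0$ the functionals $\ell_v$ have coefficients in $\F_q$, so their common zero set is $K\otimes_{\F_q}\F_{q^m}$ for some $K\le\F_q^k$. Concretely, it consists of all $\sum_j\alpha_j\kappa_j$ with $(\kappa_j)$ an $\F_q$-basis of $K$ and $\alpha_j\in\S$ free. On this set the remaining functional $\ell_w$, $w\in V\setminus V_0$, equals $\sum_j\alpha_j\circ\beta_j$ for some $\beta_j\in\S$. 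It is therefore zero or onto, because $R_\beta$ is bijective for $\beta\ne 0$. Within your own framework you could equivalently treat $T$ as an independent unknown $t$, solve the $\F_q$-linear system in $(x,y,t)$, and impose $t=\sum_i x_i\circ p_i$ once at the end.

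The strictness step has a second gap, which the paper shares, so appealing to it does not close it. Your scaling computation correctly reduces the claim to finding $\gamma\in\F_{q^m}$ and $x\in\S$ with $\gamma\cdot(x\circ p_j)\ne(\gamma\cdot x)\circ p_j$, i.e.\ to $R_{p_j}$ not being $\F_{q^m}$-linear. But the hypothesis only says that $p_j$ lies outside the right nucleus, a statement about $\circ$ alone, and replacing $\gamma\circ$ by $\gamma\cdot$ is not legitimate. The implication actually fails. Let $m=3$ and $p\in\S\setminus\F_q$. Then $R_p-\lambda\,\mathrm{id}=R_{p-\lambda 1}$ is invertible for every $\lambda\in\F_q$, so the characteristic polynomial of $R_p$ is irreducible and $\F_q[R_p]\cong\F_{q^3}$. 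Identifying $\S$ with $\F_{q^3}$ through this action gives $x\circ p=\theta\cdot x$ for a fixed $\theta$. Then $\C_{\S,2}(p)=\{(x,(y-\lambda\theta\cdot x)_{\lambda\in\F_q})\mid x,y\in\F_{q^3}\}$ is $\F_{q^3}$-linear. Yet the hypothesis holds, since the right nucleus of a proper semifield of dimension $3$ over $\F_q$ is $\F_q$. You need to assume directly that some $R_{p_j}$ fails to commute with a field multiplication under the chosen identification; with that hypothesis your argument goes through.
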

\begin{proof}
The parameters and $\F_q$-linearity follow directly by the same arguments as in Theorem~\ref{thm:construction2dim}. To prove $\mathcal{C}_{\mathcal{S},k}(p)$ is not $\mathbb{F}_{q^m}$-linear, suppose for contradiction that it is. Let $\cdot$ denote the standard field multiplication of $\F_{q^m}$. Consider the codeword $c$ generated by $(x_1, \dots, x_{k-1}, y)$. Matching the first $k-1$ coordinates and the evaluation at $\lambda = 0$ forces $\gamma \cdot c$ to equal the encoding of $(\gamma \cdot x_1, \dots, \gamma \cdot x_{k-1}, \gamma \cdot y)$. Equating the remaining coordinates therefore requires
    $$\sum_{i=1}^{k-1} \gamma \cdot (x_i \circ (\lambda p_i)) = \sum_{i=1}^{k-1} (\gamma \cdot x_i) \circ (\lambda p_i)$$
for all $\lambda \in \mathbb{F}_q$. By hypothesis, there exist an index $j$ and elements $\gamma, x \in \mathcal{S}$ such that $\gamma \circ (x \circ p_j) \neq (\gamma \circ x) \circ p_j$. Setting $x_j = x$, $x_i = 0$ for $i \neq j$, and $y = 0$, the condition at $\lambda = 1$ reduces to $\gamma \cdot (x \circ p_j) = (\gamma \cdot x) \circ p_j$. Because the proper semifield multiplication $\circ$ is strictly distinct from the field multiplication $\cdot$, this equality fails. The scaled vector is therefore not in the code, providing the contradiction.

To prove the almost affine property, consider a projection $\pi_V$ defined by an $\F_q$-subspace $V\leq \F_{q}^n$. A vector $v\in V$ is indexed as $v= (v_{\infty_1},\ldots,v_{\infty_{k-1}},(v_\lambda)_{\lambda \in \F_q})$. The condition $\langle c,v\rangle =0$ then expands to
\[
    \sum^{k-1}_{i=1} v_{\infty_i} x_i + \sum_{\lambda \in \F_q} v_\lambda \left(y - \sum^{k-1}_{i=1} x_i \circ (\lambda p_i)\right) =0.
\]
Because the scalars $\lambda \in \F_q$ associate and commute with all elements in $\S$, we have $x_i \circ (\lambda p_i) = \lambda (x_i \circ p_i)$. Regrouping terms yields
\begin{align}
    A_v y + \sum^{k-1}_{i=1} x_i \circ (v_{\infty_i} 1_\S - C_v p_i) = 0, \label{eq:rank1_kernel}
\end{align}
where $A_v = \sum_\lambda v_\lambda \in \F_q$ and $C_v = \sum_\lambda \lambda v_\lambda \in \F_q$. Define the map $C\colon V\rightarrow \F_q$ by $C(v) = C_v$. Because the coordinate summations are linear over the base field, $C$ is a well-defined linear function on $V$. Let $V_0 = \ker C$. Thus, $\dim_{\F_q} V_0 \geq \dim_{\F_q} V -1$. For any $v\in V_0$ we have $C_v =0$, so \eqref{eq:rank1_kernel} reduces to
\begin{align} \label{eq:v0_system}
    A_v y + \sum^{k-1}_{i=1} v_{\infty_i} x_i=0.
\end{align}

Let $W\subseteq \S^k$ denote the space of solutions $(x_1,\ldots,x_{k-1},y)$ of \eqref{eq:v0_system} for all $v\in V_0$. We then claim that $W$ is a left vector space over $\S$. Closure under vector addition follows immediately from the distributivity of the semifield multiplication over addition. To verify closure under left-multiplication, suppose $u=(x_1,\ldots,x_{k-1},y)\in W$, and let $s\in \S$. Substituting $s\circ u$ into the linear form then yields for any $v\in V_0$
\begin{align*}
    A_v(s\circ y) + \sum^{k-1}_{i=1}v_{\infty_i} (s\circ x_i) &= s\circ (A_v y) + \sum^{k-1}_{i=1} s\circ (v_{\infty_i} x_i) \\
    &= s\circ \left( A_v y + \sum^{k-1}_{i=1} v_{\infty_i} x_i\right) \\ &=0
\end{align*}
Thus, $W$ is a left vector space over $\S$, so its cardinality is a power of $|\S|=q^m$. 

Now, if $V = V_0$, then $\ker \pi_V = W$, and we are done. If $V \neq V_0$, the kernel $V_0$ has codimension 1 in $V$. We can therefore choose a single vector $w \in V \setminus V_0$ such that $V = V_0 \oplus \langle w \rangle_{\mathbb{F}_q}$. The total kernel of the projection by $V$ is the intersection of the $\S$-vector space $W$ with the single additional constraint imposed by $w$, which is
\[
    A_w y + \sum^{k-1}_{i=1} x_i \circ (w_{\infty_i} 1_\S - C_w p_i)=0.
\]
As $W$ is a left vector space over $\S$ and $\S$ is a division ring, adding this single linear constraint to $W$ will either be redundant and contain all of $W$, or it will uniquely determine one variable over $\S$ in terms of the others. This removes exactly one degree of freedom over $\S$, reducing the total number of solutions by a factor of $q^m$. In either case, the size of the final kernel remains a power of $q^m$.
\end{proof}

\begin{remark}
While the codes $\C_{\S,k}(p)$ constructed in Theorem~\ref{thm:constructionkdim} (as well as $\C_\S$ in Theorem~\ref{thm:construction2dim}) are strictly almost affine, their induced $q$-matroids are not simple. They are indeed loopless, however, the projection constraints allow for $2$-dimensional subspaces with projections of size $q^m$. By Corollary~\ref{cor:equivalencegeometry}, they do not form partial affine $q$-geometries.
\end{remark}

The explicit constructions provided here illustrate the difficulty of obtaining simple, strictly almost affine rank-metric codes across arbitrary parameter regimes. The codes $\C_\S$ and $\C_{\S,k}(p)$ derived from finite proper semifields exist whenever proper semifields with the requisite non-associativity properties can be constructed, but they are restricted to a minimum rank distance of 1 and induce non-simple $q$-matroids. Conversely, the AGTG construction yields a simple, strictly almost affine MRD code with a larger minimum distance, but this specific behavior relies on the parameter constraint $n=m$. Constructing simple, strictly almost affine rank-metric codes with minimum rank distance $d>1$ for lengths $n\neq m$ therefore seems like an interesting open problem.

\bibliographystyle{IEEEtran}
\bibliography{biblio}

\end{document}